\documentclass[a4paper, 12pt]{amsart}
\usepackage[T1]{fontenc}
\usepackage[utf8]{inputenc}
\usepackage[letterpaper,top=2cm,bottom=2cm,left=3cm,right=3cm,marginparwidth=1.75cm]{geometry}
\usepackage{amsmath, amssymb, graphicx}
\usepackage[colorlinks=true, allcolors=blue]{hyperref}
\newtheorem{thm}{Theorem}[section]
\newtheorem{prop}[thm]{Proposition}
\newtheorem{lemma}[thm]{Lemma}
\newtheorem{cor}[thm]{Corollary}

\DeclareMathOperator{\supp}{\mathrm{supp}}

\begin{document}

\title[Strong law of large numbers for generalized operator means II]{Strong law of large numbers for generalized operator means II}
\author[Zolt\'an L\'eka] {Zolt\'an L\'eka}
\address{Institute of Applied Mathematics, \'Obuda University, John von Neumann Faculty of Informatics, 1034 Budapest Bécsi út 96, Hungary.}
\email{leka.zoltan@uni-obuda.hu}
\author[Mikl\'os P\'alfia]{Mikl\'os P\'alfia}
\address{Department of Mathematics, Corvinus University of Budapest, 1093 Budapest F\H ov\'am t\'er 8, Hungary and Bolyai Institute, Interdisciplinary Excellence Centre, University of Szeged, H-6720 Szeged, Hungary.}
\email{miklos.palfia@uni-corvinus.hu}

\subjclass[2000]{Primary 47A56, 47A64, 60F15 Secondary 58B20}
\keywords{ODE, Thompson metric, Karcher means, Sturm's law of large numbers}

\date{\today}

\begin{abstract}
   In this paper we continue the investigation of generalizations of the strong law of large numbers to operator means which has been initiated in the earlier paper "Strong law of large numbers for generalized operator means",  Adv. Math. 457 (2024), 109933 of the authors. In particular, we clarify an inaccuracy in the statement and proof of the strong law for means associated with uniformly exponentially contracting flows in Thompson metric spaces. We then significantly improve the variant of this strong law given in the mentioned paper for operator means by relaxing the restrictive required moment condition to $L\log^+(L)$ by proving almost sure convergence of implicit stochastic approximations for monotone operators under this condition.
\end{abstract}

\maketitle

\section{Introduction}


The first goal here is to address two technical issues concerning \cite[Theorem 5.2]{lekapalfia}. First, we make explicit an omitted local $L^1$-boundedness type of assumption and modify the proof accordingly. Second, we address a separate technical gap in the proof: certain second order error terms in the stochastic analogue of \cite[(20)]{lekapalfia} are not controlled under the stated assumptions, and may, in fact, be unbounded. It follows that one cannot directly apply the "Nodice" theorem \cite[Theorem 4.2]{lekapalfia} as claimed at the beginning of the last paragraph of the proof of \cite[Theorem 5.2]{lekapalfia}. This isssue, however, does not affect the subsequent application of the results in \cite{lekapalfia} including Section 6, because the local $L^1$-boundedness condition introduced in the present paper is automatically satisfied in that setting, as verified in Section 4 below, see Corollary~\ref{C:UniformInt}.

Resolving these technical issues is only one aspect of our contribution. More importantly, our second goal is to substantially strengthen the previous convergence results by proving them under considerably weaker moment assumptions. In the final section, we strengthen the strong law for operator means in \cite[Theorem 6.9]{lekapalfia} and impose a single integrability condition in the form of an $L\log^+(L)$ condition which lies strictly between the theoretically minimal $L^1$ and the stronger, but usually proof-wise more accessible, $L^2$-condition. Recently, \cite{nguyen2026} established almost sure convergence results for explicit stochastic approximation schemes in $\mathbb{R}^d$ under $L^{1+\varepsilon}$-type conditions. Another almost sure convergence result is \cite{bianchi} for $L^2$-integrable implicit stochastic approximations in Hilbert spaces. The proof in \cite{bianchi} relies on the Robbins-Siegmund almost-supermartingale convergence theorem \cite{robbinssiegmund}, a the standard tool in stochastic approximation theory.  In our setting, we avoid supermartingale arguments and prove almost sure convergence results more directly and effectively under the weaker $L\log^+(L)$ tail condition. Having established this and using the order-preserving property of resolvent maps, our treatment reduces the problem of the strong law for generalized operator means to an implicit stochastic approximation problem on the real line. The proximal Robbins--Monro method \cite{toulis2021} provides a general framework for such implicit stochastic approximation schemes. Then we apply our result to establish almost sure boundedness of generalized resolvent iterates associated with the mean ODE. We reduce the stochastic convergence analysis to bounded metric balls, where the generated flow is invariant and uniformly exponentially contractive, and adapt ideas from optimal transport theory and the proof of \cite[Theorem 6.9]{lekapalfia} to derive the convergence. Without the flow-invariance argument, our estimates would prove to be ineffective to yield the strong law, since the exponential contraction coefficient of the mean ODE could be $0$ on the whole cone $\mathbb{P}$.

\section{Cost functions in optimal transport plans}

  All Borel probability measures in $\mathcal{P}(\mathbb{P})$ representing random variables are assumed to be fully supported, where the positive definite cone $\mathbb{P}$ of invertible operators over a Hilbert space $\mathcal{H}$ is equipped with the Thompson metric $d_\infty(a,b)=\|\log(a^{-1/2}ba^{-1/2})\|$. Recall that the support of a Borel probability measure is always separable \cite{Dudley}. Let us consider the function
  $$ \phi(x) = \int_\mathbb{P} \varphi(x,y) \: d\mu(y)$$
  where $\mu \in \mathcal{P}(\mathbb{P})$ for some measurable, locally $\|\cdot\|$-Lipschitzian function family $\varphi(\cdot,y)$ for any $y\in\mathbb{P}.$ The corresponding mean ODE is given as
\begin{equation}\label{eq:IntroODE}
\dot{x}(t) = \int_\mathbb{P} \varphi(x(t),y) \: d\mu(y)=\mathbb{E}\varphi(x(t),Y),
\end{equation}  
 where $Y$ is a $\mathbb{P}$-valued random variable with law $\mu$. The generated flow  on $\mathbb{P}$ is (globally) exponentially contractive if there exists an $\alpha > 0$ such that
   $$ d_\infty(x(t),y(t)) \leq e^{-\alpha t} d_\infty(x(0),y(0)) \quad t \geq 0$$
   for any solution curves $x,y:[0,\infty)\mapsto\mathbb{P}$ of \eqref{eq:IntroODE}.
This exponential contraction rate of the mean ODE is denoted by $\alpha(\varphi, \mu)$ for brevity or just by $\alpha$ if it is clear from the context.

 For any $\mathbb{P}$-valued random variable $Y$ with law $\mu,$ recall that $J_{\lambda}(Y, z) = J_{\lambda}^{\delta_Y}(z)$ for $\lambda>0$ denotes the unique solution to the resolvent equation
 $$ \lambda \varphi(x,Y)  + \log_x z=0$$
 in $x\in\mathbb{P}$. The existence and uniqueness of the solution follow from the fact that it is the unique equilibrium of the differential equation
   $$ \dot{x}(t)=\lambda \varphi(x(t),Y)  + \log_{x(t)} z$$
 which generates a globally exponentially contractive flow. Indeed, we have that the $\log$ function defines an order-preserving dynamical system with exponential contraction rate $1$, see \cite[Proposition 2.5]{lekapalfia}. Therefore, the contraction rate of the above flow is at least $1 + \lambda \alpha(\varphi(\cdot , Y)) > 0$, given $\varphi(\cdot, y)$ is non-expansive.

 We also define the generalized resolvent $J_\lambda^\mu(z)$ to the mean ODE as the unique solution of
  $$ \lambda \int_\mathbb{P} \varphi(x,y) \: d\mu(y) + \log_x z=0.$$

Let us introduce the continuous cost function $\displaystyle c_x$ on $\mathbb{P} \times \mathbb{P}$ defined as
\begin{equation*}
 c_x(y,z) = \| x^{-1/2}(\varphi(x,y) - \varphi(x,z))x^{-1/2}\|.
\end{equation*}
Then
  $$ c_x(y,z) \leq c_x(y,v) + c_x(v,z)$$ holds. Hence, $c_x$ is a pseudometric, but not necessarily a metric on $\mathbb{P}.$
  Since $c_x$ is continuous, the optimal transport cost functional between the
  Borel probability measures $\displaystyle \mu$ and $\displaystyle \nu$ on $\mathbb{P}$ is well-defined
\begin{equation*}
C_x(\mu, \nu) = \inf_{\pi  \in \Pi(\mu, \nu)} \mathbb{E}_\pi c_x(y,z) = \min_{\pi \in \Pi(\mu, \nu)} \int_{\mathbb{P} \times \mathbb{P}}
c_x(y,z) \: d\pi(y,z),
\end{equation*}
where $\Pi(\mu, \nu)$ denotes the set of all couplings between $\mu$ and $\nu.$

We now have the following form of a general resolvent estimate, see Corollary 3.9 in \cite{lekapalfia}:
\begin{cor}\label{C:fullresolventineq}
Let $\varphi(x,y)$ be a locally Lipschitz function on $\mathbb{P} \times \mathbb{P}.$
Let $\mu$ and $\nu$ be Borel probability measures on $\mathbb{P}$, let $p, q \in \mathbb{P}$ and let $\lambda>0$. Then, for the resolvents of the corresponding mean ODEs, we have
   \begin{equation}\label{ineq:resolvent_cost} d_\infty(J^\mu_\lambda(p), J_\lambda^\nu(q)) \leq {1 \over 1 +  \lambda \alpha(\varphi,\mu)} d_\infty(p,q)
 + {\lambda \over 1 +  \lambda \alpha(\varphi, \mu)}C_{J_\lambda^\nu(p)}(\mu, \nu).
\end{equation}
\end{cor}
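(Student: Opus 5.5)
Write $x=J^\mu_\lambda(p)$, $w=J^\nu_\lambda(p)$ and $u=J^\nu_\lambda(q)$. The triangle inequality gives
$$d_\infty(x,u)\le d_\infty(x,w)+d_\infty(w,u),$$
so the plan is to treat two separate effects: changing the base point with the measure fixed at $\nu$, and changing the measure with the base point fixed at $p$.

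\emph{Step 1: change of measure.} This step estimates $d_\infty(J^\mu_\lambda(p),J^\nu_\lambda(p))$ through the cost $C_w(\mu,\nu)$. The point $x$ is the unique equilibrium of the flow
$$\dot y=\lambda\int\varphi(y,\cdot)\,d\mu+\log_y p,$$
and this flow has contraction rate at least $1+\lambda\alpha(\varphi,\mu)$, since $\log$ contributes rate $1$ by \cite[Proposition 2.5]{lekapalfia}. Start this flow at $y(0)=w$. At $w$ the vector field equals $\lambda\int\varphi(w,\cdot)\,d\mu-\lambda\int\varphi(w,\cdot)\,d\nu$, using the resolvent equation for $w$. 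Standard contraction theory for Finsler/Thompson metrics gives $d_\infty(w,x)\le \|F(w)\|_w/\text{rate}$ for an equilibrium $x$ of a field $F$ with that rate, where $\|v\|_w=\|w^{-1/2}vw^{-1/2}\|$. Hence
$$d_\infty(w,x)\le \frac{\lambda}{1+\lambda\alpha}\Big\|w^{-1/2}\Big(\int\varphi(w,y)\,d\mu-\int\varphi(w,z)\,d\nu\Big)w^{-1/2}\Big\|.$$
For any coupling $\pi\in\Pi(\mu,\nu)$, the difference of integrals equals $\int(\varphi(w,y)-\varphi(w,z))\,d\pi$. The norm is convex, so it is bounded by $\int c_w\,d\pi$. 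Taking the infimum over $\pi$ gives $\frac{\lambda}{1+\lambda\alpha}C_w(\mu,\nu)$, which is the second term in \eqref{ineq:resolvent_cost}.

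\emph{Step 2: change of base point.} Here the plan needs a bound of the form $\frac{1}{1+\lambda\alpha}d_\infty(p,q)$. The naive bound from the $\nu$-flow would give $\frac{1}{1+\lambda\alpha(\varphi,\nu)}d_\infty(p,q)$, which has the wrong rate. The fix is to reverse the splitting: use the intermediate point $J^\mu_\lambda(q)$ and compare $J^\mu_\lambda(p)$ with $J^\mu_\lambda(q)$, where the $\mu$-rate applies. Comparing $x$ with $x'=J^\mu_\lambda(q)$ by the same equilibrium-perturbation argument, the perturbation is $\log_y p-\log_y q$. Its size in $\|\cdot\|_y$ is at most $d_\infty(p,q)$ by the nonexpansive property of $\log_y$ in the Thompson metric. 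This gives $d_\infty(x,x')\le\frac{1}{1+\lambda\alpha}d_\infty(p,q)$. However, Step 1 then produces the cost at $J^\nu_\lambda(q)$ rather than at $J^\nu_\lambda(p)$.

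To get the cost at $J^\nu_\lambda(p)$ with rate $\alpha(\varphi,\mu)$, I would instead run a single combined argument. Start the $\mu$-flow with base point $p$ at $u=J^\nu_\lambda(q)$, or use a one-sided contraction (Dini derivative) estimate along a curve. Carrying this out cleanly, possibly by interpolating the measures $(1-s)\nu+s\mu$ or comparing $\varphi(\cdot,y)$ at different points using the local Lipschitz property, is the main obstacle. The dependence of the cost on the evaluation point must match the stated $J^\nu_\lambda(p)$. If this fails I would fall back on proving the bound with the base point of $C$ taken wherever the argument naturally places it, since \cite[Corollary 3.9]{lekapalfia} is quoted and the key mechanism is the Step 1 contraction estimate.
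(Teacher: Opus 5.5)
Your Step 1 is correct. It is a slightly leaner version of the paper's own argument. The paper bounds the difference of the normalized mean fields at a point $x$ by $C_x(\mu,\nu)$ using a norming functional and Kantorovich duality, then sets $x=J^\nu_\lambda(p)$ and cites the earlier resolvent estimates. You get the same field bound directly from an arbitrary coupling, which uses only the trivial half of duality.

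\textbf{The gap.} You have proved \eqref{ineq:resolvent_cost} only in one of two modified forms: with $\alpha(\varphi,\nu)$ in the first coefficient, or with the cost evaluated at $J^\nu_\lambda(q)$. Your last paragraph lists strategies for getting $\alpha(\varphi,\mu)$ and the base point $J^\nu_\lambda(p)$ simultaneously, but none is carried out, so the statement as written is not proved.

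This cannot be repaired, because the inequality as written is false. Take $\mathcal{H}=\mathbb{C}$, so that $\mathbb{P}=(0,\infty)$, $d_\infty(s,t)=|\log s-\log t|$ and $\log_x z=x\log(z/x)$. Let $\varphi(x,y)=-\theta(y)\,x\log x-(1-\theta(y))\,x$ with $\theta$ smooth, $\theta(a)=1$, $\theta(b)=0$. Put $\mu=\delta_a$, $\nu=\delta_b$ and $\lambda=1$; the paper itself applies the corollary to Dirac and atomic measures.
\begin{itemize}
\item Then $\alpha(\varphi,\mu)=1$, $J^\mu_1(z)=\sqrt{z}$ and $J^\nu_1(z)=z/e$.
\item For $p=e^2$ and $q=e^4$ we get $J^\mu_1(p)=J^\nu_1(p)=e$ and $C_e(\mu,\nu)=c_e(a,b)=|{-e}+e|/e=0$.
\item So the right-hand side of \eqref{ineq:resolvent_cost} is $\frac12 d_\infty(e^2,e^4)=1$, while the left-hand side is $d_\infty(e,e^3)=2$.
\end{itemize}
Replacing $\varphi(x,b)$ by $-\beta x\log x-(1-\beta)x$ with $\beta\in(0,1)$ gives left-hand side $2/(1+\beta)>1$. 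So even a uniform positive lower bound on all rates does not rescue the statement.

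\textbf{The correct version.} Your ``fallback'' lands on the true statement, and your reversed splitting proves it. So does the one-shot perturbation argument you sketch. Let $F(y)=\lambda\int\varphi(y,\cdot)\,d\mu+\log_y p$, which has rate at least $1+\lambda\alpha(\varphi,\mu)$ and equilibrium $J^\mu_\lambda(p)$. Using the resolvent equation of $u=J^\nu_\lambda(q)$,
\begin{equation*}
F(u)=\lambda\Big(\int\varphi(u,y)\,d\mu(y)-\int\varphi(u,z)\,d\nu(z)\Big)+\big(\log_u p-\log_u q\big).
\end{equation*}
The exponential metric increasing property and congruence invariance give $\|u^{-1/2}(\log_u p-\log_u q)u^{-1/2}\|\le d_\infty(p,q)$. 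Hence \eqref{ineq:resolvent_cost} holds with $C_{J^\nu_\lambda(q)}(\mu,\nu)$ in place of $C_{J^\nu_\lambda(p)}(\mu,\nu)$. In the example above this corrected bound is sharp: $c_{e^3}(a,b)=2$ and both sides equal $2$.

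This corrected form is also what the paper actually uses. In Steps 3 and 4 of the proof of Theorem~\ref{T:mainStochasticConvergence}, the cost is evaluated at $S^d_{n+1}$ and at $\hat S_{n+1}$. These are the resolvents of the measure whose rate is \emph{not} used ($\mu$, resp.\ $\delta_{Z_{n+1}}$), each taken at its own base point ($S^d_n$, resp.\ $\hat S_n$). So the subscript $p$ in $C_{J^\nu_\lambda(p)}$, and the choice $x=J^\nu_\lambda(p)$ in the paper's proof, should read $q$. With that correction your argument is a complete proof. Alternatively, if $\alpha\le\min\{\alpha(\varphi,\mu),\alpha(\varphi,\nu)\}$, your first splitting yields the $J^\nu_\lambda(p)$ version with $\alpha$ in both coefficients.
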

\begin{proof}
  Fix an $x \in \mathbb{P}.$ Using some linear functional $\omega$ with $\|\omega\| = 1,$ note that
\begin{align*}
&\left\|\int_\mathbb{P} x^{-1/2}\varphi(x,y)x^{-1/2} \: d\mu(y) - \int_\mathbb{P} x^{-1/2}\varphi(x,z)x^{-1/2} \: d\nu(z) \right\| \\
&\quad = \omega \left( \int_\mathbb{P} x^{-1/2}\varphi(x,y)x^{-1/2} \: d\mu(y) - \int_\mathbb{P} x^{-1/2}\varphi(x,z)x^{-1/2} \: d\nu(z) \right) \\
&\quad = \int_\mathbb{P} \omega (x^{-1/2}\varphi(x,y)x^{-1/2}) \: d\mu(y) - \int_\mathbb{P} \omega (x^{-1/2}\varphi(x,y)x^{-1/2}) \: d\nu(z).
\end{align*}
Now, \begin{align*}
\omega(x^{-1/2}\varphi(x,y)x^{-1/2})& - \omega(x^{-1/2}\varphi(x,z)x^{-1/2})\\
&=  \omega(x^{-1/2}\varphi(x,y)x^{-1/2} - x^{-1/2}\varphi(x,z)x^{-1/2}) \\
&\leq \|x^{-1/2}(\varphi(x,y) - \varphi(x,z))x^{-1/2}\| \\
& = c_x(y,z).
\end{align*} From
the Kantorovich duality theorem, we obtain
\begin{align*}
&\left\| \int_\mathbb{P} x^{-1/2}\varphi(x,y)x^{-1/2} \: d\mu(y) - \int_\mathbb{P} x^{-1/2}\varphi(x,z)x^{-1/2} \: d\nu(z) \right\| \\
&\hspace{2cm} \leq \sup_{\substack{
(\phi, \psi) \in L^1(\mu) \times L^1(\nu) \\
\phi - \psi \leq c_x }}
\left\{ \int_\mathbb{P} \phi(y) \: d\mu(y) - \int_\mathbb{P} \psi(z)  \: d\nu(z) \right\} \\
 &\hspace{2cm} = \min_{\pi \in \Pi(\mu, \nu)} \int_{\mathbb{P} \times \mathbb{P}}
c_x(y,z) \: d\pi(y,z) = C_x(\mu,  \nu).
\end{align*}
Choosing $x = J_\lambda^\nu(p),$ the result follows from Proposition 3.4 and Corollary 3.8 in \cite{lekapalfia}.
\end{proof}

Assume that for each $y$ the vector field $\varphi(\cdot, y)$ defines a globally exponentially contractive flow on $\mathbb{P}$. Let $\Lambda_{\mu}$ denote the unique solution to the mean equation
\begin{equation*}
 \int_\mathbb{P} \varphi(x,y) \: d\mu(y) = 0
\end{equation*}
 for $x$, cf. \cite[Proposition 3.1]{lekapalfia}. Notice that
$\displaystyle \Lambda_{\mu}$\ is the fixed point of the non-linear differential equation
\begin{equation*}
\dot{x}(t) = \int_\mathbb{P} \varphi(x(t),y) \: d\mu(y).
\end{equation*}
Let $\displaystyle \nu$ be another Borel probability measure on $\mathbb{P}$. Applying inequality in \cite[Proposition 3.6]{lekapalfia} to the differential equation above, we immediately obtain
\begin{align*}
d_\infty(\Lambda_{\mu},\Lambda_{\nu}) &\leq {1 \over \alpha(\varphi, \mu)} \left\| \int_\mathbb{P} \varphi(\Lambda_{\nu},y) \: d \mu(y) \right\| \\
&=  {1 \over \alpha(\varphi, \mu)} \left\| \int_\mathbb{P} \varphi(\Lambda_{\nu},y) \: d \mu(y) - \int_\mathbb{P} \varphi(\Lambda_{\nu},z) \: d \nu(z) \right\| \\
&= {1 \over \alpha(\varphi, \mu)}  \left(\int_\mathbb{P} \omega(\varphi(\Lambda_{\nu},y)) \: d \mu(y) - \int_\mathbb{P} \omega(\varphi(\Lambda_{\nu},z)) \: d \nu(z) \right)
\end{align*}
for some linear functional $\omega$ with norm $1$.

Similarly, the Kantorovich duality theorem implies
  \begin{align*} \int_\mathbb{P} \omega(\varphi(\Lambda_{\nu},y)) \: d \mu(y) - \int_\mathbb{P} \omega(\varphi(\Lambda_{\nu},z)) \: d \nu(z) &\leq \min_{\pi \in \Pi(\mu, \nu)} \int_{\mathbb{P} \times \mathbb{P}}
c_{\Lambda_{\nu}}(y,z) \: d\pi(y,z) \\ &=
 C_{\Lambda_{\nu}}(\mu, \nu).
 \end{align*}

Hence, we obtain
\begin{cor}[Distance between fixed points]\label{C:distancefixedpoints}
  \begin{equation*}
d_\infty(\Lambda_{\mu},\Lambda_{\nu}) \leq {1 \over \alpha(\varphi, \mu)} C_{\Lambda_{\nu}}(\mu, \nu).
\end{equation*}
\end{cor}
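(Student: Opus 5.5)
The bound follows by chaining two estimates already derived in the text just before the statement; the only real work is justifying the second one.

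\emph{Step 1: stability at the equilibrium.} I apply \cite[Proposition 3.6]{lekapalfia} to the mean ODE $\dot x=\int_\mathbb{P}\varphi(x,y)\,d\mu(y)$. This flow is exponentially contractive with rate $\alpha(\varphi,\mu)$ and has unique equilibrium $\Lambda_\mu$. For any point $x$, comparing the trajectory started at $x$ with the constant trajectory at $\Lambda_\mu$ gives
$d_\infty(\Lambda_\mu,x)\le \alpha(\varphi,\mu)^{-1}\bigl\|\int_\mathbb{P}\varphi(x,y)\,d\mu(y)\bigr\|$.
The norm here is the one measured in the tangent space at $x$, namely $\|x^{-1/2}(\cdot)x^{-1/2}\|$. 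I take $x=\Lambda_\nu$. Since $\int_\mathbb{P}\varphi(\Lambda_\nu,z)\,d\nu(z)=0$, I may subtract this term for free. The difference of the two integrals is a bounded self-adjoint operator, so Hahn--Banach gives a norm-one functional $\omega$ attaining its norm. By linearity and continuity of $\omega$, the norm of the difference then equals
$\int_\mathbb{P}\omega(\cdot)\,d\mu-\int_\mathbb{P}\omega(\cdot)\,d\nu$.
Interchanging $\omega$ with the Bochner integrals requires $\varphi(\Lambda_\nu,\cdot)$ to be integrable under both $\mu$ and $\nu$. This is implicit in $\Lambda_\mu$ and $\Lambda_\nu$ being defined.

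\emph{Step 2: Kantorovich duality.} Set $f(y)=\omega(\Lambda_\nu^{-1/2}\varphi(\Lambda_\nu,y)\Lambda_\nu^{-1/2})$. Then $f(y)-f(z)\le c_{\Lambda_\nu}(y,z)$, so the pair $(f,f)\in L^1(\mu)\times L^1(\nu)$ is admissible in the dual problem. Hence $\int f\,d\mu-\int f\,d\nu$ is bounded by the dual supremum. Alternatively, and more simply, for any coupling $\pi\in\Pi(\mu,\nu)$ I can write $\int f\,d\mu-\int f\,d\nu=\int(f(y)-f(z))\,d\pi\le\int c_{\Lambda_\nu}\,d\pi$ and then take the infimum over $\pi$. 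This avoids invoking duality at all.

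Combining Step 1 and Step 2 gives $d_\infty(\Lambda_\mu,\Lambda_\nu)\le \alpha(\varphi,\mu)^{-1}C_{\Lambda_\nu}(\mu,\nu)$.

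\emph{Main obstacle.} The delicate point is Step 1: it must be checked that \cite[Proposition 3.6]{lekapalfia} yields exactly the scaled (Thompson-tangent) norm that appears in $c_x$. If that proposition is stated with the plain operator norm, I would rederive it. Differentiate $t\mapsto d_\infty(x(t),\Lambda_\mu)$ along the flow started at $\Lambda_\nu$. Use the contraction rate $\alpha$ together with the Finsler structure $\|x^{-1/2}vx^{-1/2}\|$ to get $d_\infty(\Lambda_\nu,\Lambda_\mu)\le\int_0^\infty e^{-\alpha t}\|\dot x(0)\|_{\Lambda_\nu}\,dt$, from which the stated bound follows.
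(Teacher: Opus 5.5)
Your proposal is correct and follows the paper's own argument: apply \cite[Proposition 3.6]{lekapalfia} at $x=\Lambda_\nu$, subtract the vanishing integral $\int_\mathbb{P}\varphi(\Lambda_\nu,z)\,d\nu(z)$, pass to a norming functional $\omega$, and bound the resulting difference of integrals by the transport cost. Your direct coupling estimate is just the elementary (weak-duality) half of Kantorovich duality, which is all the paper actually needs. Your use of the scaled norm $\|\Lambda_\nu^{-1/2}(\cdot)\Lambda_\nu^{-1/2}\|$ is the normalization that matches $c_{\Lambda_\nu}$; the paper's display leaves this scaling out.
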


 \bigskip

\section{Stochastic resolvent iterations}

 We now present the main result of this section. It clarifies the additional local $L^1$-domination condition, which in particular implies a uniform integrability-type condition on balls, used in \cite[Theorem 5.2]{lekapalfia} and substantially expands and elucidates parts of the proof. The new ingredients in our argument include tools from optimal transport theory, most notably Kantorovich duality \cite{villani_oldnew}.

\begin{thm}\label{T:mainStochasticConvergence}
    Assume that \(\varphi: \mathbb{P} \times \mathbb{P} \rightarrow \mathbb{S}\) is Lipschitz continuous on any bounded \(d_\infty\)-metric ball. Furthermore, for each \(x \in \mathbb{P}\), there exists an \(\varepsilon > 0\) such that the family of functions
    $$ \mathcal{F}_{x, \varepsilon} = \left\{ \left\| s^{-1/2} \varphi(s, y) s^{-1/2} \right\| \colon d_\infty(s,x) < \varepsilon \right\} $$
admits a common $L^1$-dominating function. In addition, for every \(y \in \mathbb{P}\), we have \(\alpha(\varphi(\: \cdot \:, y)) \geq \alpha > 0\). Let \(Y_1, Y_2, \ldots\) be a sequence of i.i.d. random variables with law \(\mu \in \mathcal{P}(\mathbb{P})\). Then, for any initial point \(S_0 \in \mathbb{P}\), the stochastic resolvent iteration defined by
$$
S_{n+1} = J_{\frac{1}{n+1}}(Y_{n+1}, S_n)
$$
almost surely converges to \(\Lambda_\varphi(\mu)\) in the Thompson metric.
\end{thm}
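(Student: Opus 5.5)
We compare the stochastic iteration with the deterministic resolvent iteration of the mean ODE, using the resolvent estimate of Corollary~\ref{C:fullresolventineq} together with Kantorovich duality to bound the one-step error. The point is to avoid any second order terms.

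Write $\Lambda=\Lambda_\varphi(\mu)$. Because $\Lambda$ is the fixed point of the mean ODE, it satisfies $J_\lambda^\mu(\Lambda)=\Lambda$ for every $\lambda>0$. We apply Corollary~\ref{C:fullresolventineq} with the single-point measure $\delta_{Y_{n+1}}$ in the role of $\nu$. With $\lambda_n=1/(n+1)$ this gives
$$
d_\infty(S_{n+1},\Lambda)\le \frac{1}{1+\lambda_n\alpha}\,d_\infty(S_n,\Lambda)+\frac{\lambda_n}{1+\lambda_n\alpha}\,C_{x_n}(\delta_{Y_{n+1}},\mu),
$$
with $x_n=J_{\lambda_n}^{\delta_{Y_{n+1}}}(\Lambda)$. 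Since $\Pi(\delta_{Y_{n+1}},\mu)$ contains only the product coupling,
$$
C_{x_n}(\delta_{Y_{n+1}},\mu)=\int c_{x_n}(Y_{n+1},z)\,d\mu(z).
$$
We use the hypothesis $\alpha(\varphi(\cdot,y))\ge\alpha$ to replace $\alpha(\varphi,\mu)$ by $\alpha$; this needs a remark that averaging contractive vector fields preserves the rate. The bound above is only a first-order estimate. It is not good enough as it stands, because $C$ is not centered, so it cannot directly give convergence.

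To obtain cancellation we instead use the \emph{linearized} form behind Corollary~\ref{C:fullresolventineq}, namely Proposition 3.4 and Corollary 3.8 of \cite{lekapalfia}. These bound $d_\infty(S_{n+1},J^\mu_{\lambda_n}(S_n))$ by $\lambda_n$ times the norm of $x^{-1/2}(\varphi(x,Y_{n+1})-\int\varphi(x,z)d\mu(z))x^{-1/2}$, evaluated at $x=J_{\lambda_n}^{\delta_{Y_{n+1}}}(S_n)$ or a nearby point. The plan has four steps.

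\emph{(i) Localization.} Fix a ball $B$ around $\Lambda$. By compactness-type arguments on the separable support, we cover $B$ by finitely many $\varepsilon$-balls from the hypothesis, and so obtain a single dominating function $g\in L^1(\mu)$ for $\|s^{-1/2}\varphi(s,y)s^{-1/2}\|$ uniformly in $s\in B$. Since the ball is not compact in infinite dimension, this step may instead have to use a countable dense net of centers together with Lipschitz continuity.

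\emph{(ii) Bounded iterates.} The resolvent contraction and the domination give
$$
d_\infty(S_{n+1},\Lambda)\le d_\infty(S_n,\Lambda)+\lambda_n g(Y_{n+1})+\text{const}\cdot\lambda_n .
$$
We combine this with the drift $-\lambda_n\alpha\, d_\infty(S_n,\Lambda)$ and with the Kolmogorov SLLN for $\tfrac1n\sum g(Y_k)$. By the discrete Gronwall/averaging lemma, $\sup_n d_\infty(S_n,\Lambda)<\infty$ almost surely.

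\emph{(iii) Almost sure convergence.} On the event that the iterates stay in $B$, the error
$$
\xi_{n+1}=\varphi(x,Y_{n+1})-\mathbb{E}\varphi(x,Y)
$$
is a martingale-difference-like term at points $x$ that depend on the past. We handle this through the Kantorovich bound: the per-step error is dominated by a function of $Y_{n+1}$ alone, and Lipschitz continuity on $B$ lets us replace $x$ by a point of a countable net. We then apply the SLLN for each net point, which gives $\tfrac1n\sum_{k\le n}\xi_k(q)\to0$ almost surely for every $q$ in the net simultaneously.

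\emph{(iv) Conclusion.} We feed this into the recursion
$$
a_{n+1}\le(1-\lambda_n\alpha')a_n+\lambda_n\xi_{n+1}.
$$
With Ces\`aro-type weights $\prod(1-\lambda_k\alpha')\sim n^{-\alpha'}$, Abel summation shows that $a_n\to0$ whenever the averaged noise tends to $0$.

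The main obstacle is step (iii). The noise is evaluated at random, past-dependent points, and only $L^1$ integrability holds, so no $L^2$ martingale bound is available. I would first try the net-plus-Lipschitz replacement, controlling the replacement error by $\mathrm{Lip}_B\cdot d_\infty(x_n,q)$, which can be made small uniformly. If that fails, the fallback is to apply the ``Nodice'' theorem \cite[Theorem 4.2]{lekapalfia} directly to the first-order recursion on $B$; this avoids the uncontrolled second-order terms by keeping the estimate in the contraction form of Corollary~\ref{C:fullresolventineq}.
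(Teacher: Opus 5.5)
Your first-order estimate is correct. With Etemadi's weighted SLLN it does give a.s.\ boundedness. The cost is evaluated at $J^{\delta_{Y_{n+1}}}_{\lambda_n}(\Lambda)$, which eventually lies in the $\varepsilon$-ball around $\Lambda$ where the domination holds, so you do not even need step (i) on a whole ball $B$.

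The gap is in (iii)--(iv). The scalar recursion $a_{n+1}\le(1-\lambda_n\alpha')a_n+\lambda_n\xi_{n+1}$ with a \emph{signed, centered} noise is never derived, and the tools you invoke cannot produce it.
\begin{itemize}
\item The estimate you quote from Proposition 3.4 and Corollary 3.8 of \cite{lekapalfia} bounds $d_\infty(S_{n+1},J^\mu_{\lambda_n}(S_n))$ by $\lambda_n\|x^{-1/2}\xi_{n+1}x^{-1/2}\|$. That is a norm, with strictly positive mean unless $\mu$ is a point mass.
\item So the ``linearized form'' is exactly as non-centered as $C_{x_n}(\delta_{Y_{n+1}},\mu)$. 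It again yields only a positive bound on $\limsup_n d_\infty(S_n,\Lambda)$.
\item The Thompson metric has no inner-product structure that would produce a martingale cross term. The one-sided derivative of the operator norm is sublinear, not linear, in the increment. Hence an SLLN for $\frac1n\sum_k\xi_k(q)$ never enters the recursion for $d_\infty$.
\item The net-plus-Lipschitz device has separate problems. Bounded $d_\infty$-balls are not totally bounded in infinite dimensions. Also, $\varphi$ is Lipschitz only on bounded subsets of $\mathbb{P}\times\mathbb{P}$, while $Y$ is unbounded.
\item Your fallback, applying the Nodice theorem to the $Y$-driven iteration, is precisely the step the paper identifies as invalid. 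Summing a block of resolvent equations produces $O((n+1)^{-2})$ second-order terms whose constants depend on the $\varphi(\cdot,Y_i)$. These are not controlled under an $L^1$ hypothesis.
\end{itemize}

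The missing idea is to make the non-centered error small in mean rather than trying to cancel it. The paper does this as follows.
\begin{itemize}
\item It uses the localized cost $\bar c(y,z)=\sup\{c_x(y,z): d_\infty(x,\Lambda_\mu)<\varepsilon\}$, which is integrable by the domination hypothesis. For the empirical measures $\mu_k$ one has $\bar C(\mu,\mu_k)\to 0$ a.s.\ and $\mathbb{E}\bar C(\mu,\mu_k)\to 0$.
\item For fixed $k$, an optimal plan couples $Y_n$ with i.i.d.\ $Z_n\sim\mu_k$, which take finitely many values.
\item For the $Z$-driven iteration $\hat S_n$, the block (Nodice) summation is legitimate, because the second-order errors are uniformly $O(n^{-2})$. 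Comparing with the deterministic resolvent iteration gives $\limsup_n d_\infty(\hat S_n,\Lambda_{\mu_k})\le\varepsilon/\alpha$.
\item Corollary~\ref{C:fullresolventineq} together with Etemadi's weighted SLLN for the i.i.d.\ pairs $(Y_n,Z_n)$ gives $\limsup_n d_\infty(S_n,\hat S_n)\le\bar C(\mu,\mu_k)/\alpha$.
\item Corollary~\ref{C:distancefixedpoints} gives $d_\infty(\Lambda_{\mu_k},\Lambda_\mu)<\varepsilon/2$, and letting $\varepsilon\to0$ finishes the proof.
\end{itemize}
Without such a transport-approximation step, your argument stops at a.s.\ boundedness.
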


\begin{proof}

 We divide the argument into several steps. First, as usual, fix a sufficiently small $\varepsilon > 0$.

\bigskip

{\bf Step 1.} From Varadarajan’s empirical large number theorem, the empirical averages
$\displaystyle \mu_k := {1 \over k} \sum_{i=1}^k \delta_{Y_i}$\ converge weakly to $\displaystyle \mu$\ a.s. if
$\displaystyle k \to \infty.$\ \ Let us now define the cost function
$$
\bar{c}(y,z) = \mbox{sup} \{ c_x(y,z) \colon d_\infty(x,\Lambda_{ \mu}) < \varepsilon)\}.
$$
Since each $c_x$ is continuous on $\mathbb{P} \times \mathbb{P}$, the supremum $\displaystyle \bar{c}$ is lower semi-continuous on $\mathbb{P} \times \mathbb{P}$. Moreover, by the local $L^1$-domination condition,  for every fixed
$\displaystyle z \in \mathbb{P},$ the function $\bar{c}(\cdot, z)$ belongs to $\displaystyle L^1(\mu) $. Hence, the optimal transport cost between the Borel probability measures $\displaystyle \mu$\ and
$\displaystyle \mu_k$\ is well-defined
\begin{equation*}
\bar{C}(\mu, \mu_k) = \min_{\pi \in \Pi(\mu, \mu_k)} \mathbb{E}_\pi \bar{c}(y,z).
\end{equation*}
Applying the Strong Law of Large Numbers to $\displaystyle \bar{c}(\cdot, z_0) \in L^1(\mu)$, we also have that
$$\int_\mathbb{P}  \bar{c}(y, z_0) \: d\mu_k(y) \to \int_\mathbb{P}  \bar{c}(y, z_0) \: d \mu(y) \mbox{ a.s. } k \rightarrow \infty.$$
In addition, since $\displaystyle \mu_k$ converges to $\displaystyle \mu$ weakly, the stability theorem of optimal
transport plans with a standard truncation approach, see \cite[p. 114]{villani_oldnew}, implies the convergence
\begin{align*}
\min_{\pi \in \Pi(\mu, \mu_k)} \mathbb{E}_\pi \bar{c}(y,z) \to 0, \quad k \to \infty.
\end{align*}
We have $C_{\Lambda_\mu}(\mu, \mu_k) \leq \bar{C}(\mu, \mu_k),$ and then $d_\infty (\Lambda_{ \mu_k}, \Lambda_{ \mu}) \to 0 \; (k \to \infty)$ a.s. also holds, see Corollary~\ref{C:distancefixedpoints}. Since
$\displaystyle \bar{c}$\ is lower semi-continuous, the map $\displaystyle (\mu, \nu) \mapsto \bar{c}(\mu, \nu) $ is
l.s.c. in the weak topology on the product space of probability measures. The infimum of any family of l.s.c.
functions is Borel measurable, and we obtain for any fixed $k$ that $\bar{C}(\mu,
\mu_k)$\ is measurable with respect to the Borel $\sigma$-algebra on $\mathbb{P}^{\otimes k}$ (and
$\mathbb{P}^\mathbb{N}$). We claim that
$$\displaystyle \mathbb{E}\bar{C}(\mu, \mu_k) \rightarrow 0, \quad  k \rightarrow \infty.$$
Indeed, we use a standard decomposition trick for the cost function $\displaystyle \bar{c}$ that satisfies the triangle inequality. Then, we obtain (see \cite[p. 114]{villani_oldnew})
\begin{align*}
\int_{\mathbb{P} \times \mathbb{P}} \bar{c}(x,y) d\pi_k(x,y) &\leq \int_{\{\bar{c}(x,y) < R\}} \bar{c}(x,y) d\pi_k(x,y)  \\
& \quad + 2\int_{\{\bar{c}(x,z_0) > R/2\}} \bar{c}(x,z_0) d\mu(x) + 2\int_{\{\bar{c}(z_0,y) > R\}} \bar{c}(z_0,y) d\mu_k(y).
\end{align*}
Hence
\begin{equation*}
\mathbb{E}\bar{C}(\mu, \mu_k) \leq \mathbb{E} \int_{\{\bar{c}(x,y) < R\}} \bar{c}(x,y) d\pi_k(x,y) +
4\int_{\{\bar{c}(x,z_0) > R/2\}} \bar{c}(x,z_0) d\mu(x).
\end{equation*}
Since $\displaystyle \mu_k \to \mu$\ weakly, from the stability of optimal transport plans \cite[Theorem 5.20]{villani_oldnew}, we have $\pi_k \to \pi$ weakly, where $\pi$\ is the trivial coupling $\displaystyle (Y, Y)$\ and
$\displaystyle Y \sim \mu$. Hence, Lebesgue’s dominated theorem implies that the expected value
of the right-hand side tends to $\displaystyle 0$. From the integrability of the cost functional $\displaystyle
\bar{c}( \cdot, z_0)$, the second integral is arbitrarily small if $\displaystyle R$\ is large enough. We conclude that
$\displaystyle \mathbb{E}\bar{C}(\mu, \mu_k) \rightarrow 0$ as $\displaystyle k \rightarrow \infty.$

\bigskip

\textbf{Step 2. }Now, fix $k$ such that $d_\infty(\Lambda_{\mu_k},\Lambda_{\mu}) < \varepsilon/2$\ a.s.,
\ $\displaystyle \bar{C}(\mu, \mu_k) < \varepsilon$\ a.s. and $\displaystyle \mathbb{E}\bar{C}(\mu, \mu_k) <
\varepsilon$\ are all fulfilled. Let $\displaystyle \pi_k$\ be the optimal transport plan defined by the optimal cost
$\displaystyle \bar{C}(\mu, \mu_k)$. Given the random variable $Y$\ with distribution $\mu$, the transport plan $\pi_k$\ also defines the coupling $(Y, Z)$, where $\displaystyle
Z$\ has distribution $\displaystyle \mu_k$. Thus, the sequence of i.i.d. random variables $Y_1 ,
Y_2 , \hdots $ with law $\mu$ generates a sequence of i.i.d. random variables $\displaystyle Z_1 , Z_2 , . . .$ with law $\displaystyle \mu_k .$ Let us introduce the stochastic resolvent sequence
\begin{equation*}
  {1 \over n+1} \varphi(\hat{S}_{n+1}, Z_{n+1}) + \log_{\hat{S}_{n+1}} \hat{S}_n  = 0, \quad n=1,2, \hdots .
\end{equation*}

\bigskip

\textbf{Step 3.} We prove that
\begin{equation*}
\limsup_{n \to \infty} d_\infty(\hat{S}_{n}, \Lambda_{ \mu_k}) \leq {\varepsilon \over \alpha}.
\end{equation*}
Note that the variables $Z_{n+1}$-s are $\mathbb{P}$-valued i.i.d.-s with finitely many values. From the resolvent estimate Corollary~\ref{C:fullresolventineq}, we obtain that the sequence
$\displaystyle \hat{S}_n$ is bounded almost surely. Indeed, with $\lambda = 1/(n+1)$,
\begin{align*}
d_\infty(\Lambda_{ \mu_k},\hat{S}_{n+1}) &= d_\infty(\Lambda_{ \mu_k}, J_{1/n}^{\delta_{Z_{n+1}}}(\hat{S}_{n+1}) \\ &\leq {n+1 \over n+1 + \alpha } d_\infty(\Lambda_{ \mu_k}, \hat{S}_n) + {\alpha \over
n+1+\alpha}{\|Z_{n+1}^{-1/2} \varphi(\Lambda_{\mu_k}, Z_{n+1}) Z_{n+1}^{-1/2} \| \over \alpha}.
\end{align*}
Since the variable $\|\varphi(\Lambda_{\mu_k}, Z)\|$\ is uniformly bounded, a simple induction shows that $\hat{S_n}$\ is
bounded almost surely.

Following the calculations in the no-dice theorem \cite[Theorem 4.2]{lekapalfia}, summing the resolvent equations
$$ {1 \over nk + i} \varphi(\hat{S}_{nk+i-1}, Z_{nk+i-1}) + \log_{ \hat{S}_{nk+i}}  \hat{S}_{nk+i-1}  = 0\quad   i=1, \hdots, k$$
yields the formula
\begin{equation}\label{eq:nodiceResolventWithError}
      {1 \over {(n+1)k}} \sum_{i=1}^{k} \varphi(\hat{S}_{nk+k},Z_{nk+i-1}) = -\log_{\hat{S}_{nk+k}} \hat{S}_{nk}   + O\left({1 \over (n+1)^2}\right).
      \end{equation}

Let us introduce an auxiliary random function: $$\displaystyle \psi_n(x) =   {1 \over k }\sum_{i=1}^k \varphi(x, Z_{nk+i-1})
=  \mathbb{E}_{\mu_{k,n}} \varphi(x, W),  $$ for each $\displaystyle n$,  where $\displaystyle \mu_{k,n}$ denotes the random atomic measure supported on $\displaystyle Z_{nk+i-1}$ with probability $\displaystyle 1/k$ (and $W \sim \mu_{k,n}$). We also consider the random resolvent iteration $S_n^r$ defined by
$$ {1 \over n+1} \psi_n(S_{n+1}^r) + \log_{S_{n+1}^r} S_n^r = 0, \quad n = 1,2, \hdots.$$
For each $n$, note that $S_{n+1}^r$ is the only fixed point of the differential equation
$$ \dot{x}(t) = {1 \over n+1} \psi_n(x(t)) + \log_{x(t)}S_n^r ,$$
since the generated flow is exponentially contractive on $\mathbb{P}.$ Indeed, by \cite[Corollary 2.2]{lekapalfia}, recall that $$\alpha \left({1 \over n+1} \psi_n(x(t)) + \log_{x(t)}S_n^r \right) \geq \alpha \left({1 \over n+1} \psi_n(x(t))\right) + 1.$$

From the contraction property \cite[Theorem 2.4]{lekapalfia} and \eqref{eq:nodiceResolventWithError}, we have
\begin{align*}
  d_\infty(S_{n+1}^r,\hat{S}_{nk+k}) &\leq {1 \over 1 + \alpha((n+1)^{-1}\psi_n)} \left\|\hat{S}_{nk+k}^{-1/2}\left( \frac{1}{n+1} \psi_n(\hat{S}_{nk+k}) + \log_{\hat{S}_{nk+k}} S_n^r \right)\hat{S}_{nk+k}^{-1/2} \right\| \\
  &= {1 \over 1 + \alpha((n+1)^{-1}\psi_n)} \| \hat{S}_{nk+k}^{-1/2}( - \log_{\hat{S}_{nk+k}} \hat{S}_{nk} + \log_{\hat{S}_{nk+k}} S_n^r)\hat{S}_{nk+k}^{-1/2}\| \\
  &\quad+ O((n+1)^{-2}) \\
  &\leq {1 \over 1 + \alpha / (n+1)} d_\infty( S_n^r, \hat{S}_{nk}) + O((n+1)^{-2}),
\end{align*}
where we used the exponential metric increasing inequality for Thompson metric and its invariance in the last step.
Thus, $d_\infty(S_n^r , \hat{S}_{nk+k}) \to 0$ as $n \to \infty.$

We also introduce the deterministic sequence
\begin{equation*}
   {1 \over n+1} \mathbb{E}_\mu \varphi(S^d_{n+1}, Y) + \log_{S^d_{n+1}} S^d_n  = 0, \quad n = 1,2, \hdots.
\end{equation*}
Recall that the algorithm converges in $d_\infty$; that is, $\displaystyle S_{n}^d \to \Lambda_{\mu}$, $\displaystyle n \to \infty,$ see \cite[Theorem 4.1]{lekapalfia}. Hence,
for any large $\displaystyle n,$ applying the resolvent inequality in Corollary~\ref{C:fullresolventineq} for the mean ODE $\dot{x} = \mathbb{E}_\mu(x, Y)$ and the empirical mean ODE $\dot{x} = \mathbb{E}_{\mu_{k,n}}(x, W),$ we obtain
\begin{align*}
d_\infty(S_{n+1}^d,S_{n+1}^r) &= d_\infty(J^\mu_{1/(n+1)}(S_n^d), J^{\mu_{k,n}}_{1/(n+1)}(S_n^r)) \\
 &\leq {1 \over 1 + \alpha(\varphi, \mu_{k,n})(n+1)^{-1}} d_\infty(S_{n}^d, S_{n}^r) \\
 &\quad+ {(n+1)^{-1} \over 1 + \alpha(\varphi, \mu_{k,n})(n+1)^{-1}}
C_{S_{n+1}^d}(\mu, \mu_{k,n}) \\
&\leq  {n+1  \over n+1 + \alpha} d_\infty(S_{n}^d , S_{n}^r) + {\alpha \over n+1 + \alpha} {\bar{C}(\mu, \mu_{k,n}) \over \alpha}.
\end{align*}
Note that $\bar{C}(\mu, \mu_{k,1}), \bar{C}(\mu, \mu_{k,2}), \hdots$\ are i.i.d. random variables defined
on $\mathbb{P}^{\otimes k}.$ Since the optimal transport cost is convex in both variables, we obtain
\begin{equation*}
\int_{\mathbb{P}^{\otimes k}} \bar{C}\left(\mu, {1 \over k}\sum_{i=1}^k Y_{nk+i-1} \right) d\mu^{\otimes k} \leq \int_{\mathbb{P}}
\bar{C}(\mu, \delta_{Y_1}) d\mu  = \int_{\mathbb{P}} \bar{c}(y, z) d\mu(z) < \infty;
\end{equation*}
that is, $\displaystyle \bar{C}(\mu, \mu_{k,n}) \in L^1(\mathbb{P}^{\otimes k}, \mu^{\otimes k})$. Hence, from the SLLN for weighted averages (see \cite[Theorem 1]{Etemadi}), we have
$$\displaystyle \limsup_{n \rightarrow \infty} d_\infty(S_{n+1}^d , S_{n+1}^r) \leq {1 \over  \alpha}\mathbb{E}\bar{C}(\mu,
\mu_k) \leq {\varepsilon \over \alpha}.$$
Since
\begin{equation*}
 d_\infty(\hat{S}_{nk},\Lambda_{ \mu_k}) \leq d_\infty(\hat{S}_{nk}, S_n^r) + d_\infty(S_n^r,S_n^d) + d_\infty(S_n^d , \Lambda_{ \mu_k}),
\end{equation*}
we find that
\begin{equation*}
\limsup_{n \to \infty} d_\infty(\hat{S}_{nk} , \Lambda_{ \mu_k}) \leq {\varepsilon \over \alpha}.
\end{equation*}
 From $d_\infty(\hat{S}_n,\hat{S}_{n+1}) \leq Kn^{-1}$, it follows
\begin{equation*}
\limsup_{n \to \infty} d_\infty(\hat{S}_{n},\Lambda_{ \mu_k}) \leq {\varepsilon \over \alpha}.
\end{equation*}

\bigskip

\textbf{Step 4. }We claim that
$$\displaystyle \limsup_{n \to \infty} d_\infty(\hat{S}_n,S_n) < { \varepsilon \over \alpha} \text{a.s.} $$
Since $d_\infty(\hat{S}_n,\Lambda_{\mu_k}) < \varepsilon/\alpha$, for any large $\displaystyle n$, and
$d_\infty(\Lambda_{\mu_k}, \Lambda_{\mu}) < \varepsilon/2$, we also obtain that $\displaystyle C_{\hat{S}_{n+1}}(\mu, \mu_k)
\leq \bar{C}(\mu, \mu_k) $ a.s. Moreover, for any large $\displaystyle n,$  by the general resolvent contraction inequality in Corollary~\ref{C:fullresolventineq}
\begin{align*}
d_\infty&(\hat{S}_{n+1},S_{n+1}) = d_\infty(J_{1/(n+1)}^{\delta_{Z_{n+1}}}{\hat{S}_n},
J_{1/(n+1)}^{\delta_{Y_{n+1}}}{S_n}) \\
&\leq {1 \over 1 + \alpha(\varphi, \delta_{Y_{n+1}}) (n+1)^{-1}} d_\infty(\hat{S}_n,  S_n) + {(n+1)^{-1} \over
1 + \alpha(\varphi, \delta_{Y_{n+1}}) (n+1)^{-1}}  c_{\hat{S}_{n+1}}(  Y_{n+1}, Z_{n+1}) \\
& \leq {1 \over 1 + \alpha(\varphi, \delta_{Y_{n+1}}) (n+1)^{-1}}
d_\infty(\hat{S}_n,S_n) + {(n+1)^{-1} \over 1 + \alpha(\varphi, \delta_{Y_{n+1}}) (n+1)^{-1}}  \bar{c}(  Y_{n+1}, Z_{n+1}) \mbox{ a.s.}
\end{align*}
From Etemadi's weighted SLLN (see \cite[Theorem 1]{Etemadi}) to the i.i.d. couples $(Y_n, Z_n) \sim \pi_k$, we have
$$ \limsup_{n \rightarrow \infty} d_\infty(\hat{S}_{n+1}, S_{n+1}) \leq  { \mathbb{E}_{\pi_k}\bar{c}(\mu, \mu_k) \over \alpha}  =  {\bar{C}(  \mu, \mu_k) \over \alpha}
\mbox{ a.s. } $$
Combining these inequalities, we obtain that, for every sufficiently large $\displaystyle n$,
\begin{align*}
d_\infty(S_{n+1}, \Lambda_{ \mu}) &\leq d_\infty(S_{n+1}, \hat{S}_{n+1}) + d_\infty(\hat{S}_{n+1}, \Lambda_{ \mu_k}) + d_\infty(\Lambda_{ \mu_k},\Lambda_{\mu}) \\
 &\leq {\varepsilon \over \alpha} + {\varepsilon \over \alpha} + {\varepsilon \over 2} \mbox{ a.s. }
\end{align*}
 Since $\displaystyle \varepsilon$\ is arbitrarily small, the proof is completed.
\end{proof}
 
\section{Applications to operator means}

Here, we follow the notations of the last section in \cite{lekapalfia}. The (non-commutative) perspectives of $f$ are given by the maps
$$x \mapsto f_x(y):=x^{1/2}f(x^{-1/2}yx^{-1/2})x^{1/2}.$$

By \cite[Proposition 2.8]{lekapalfia}, for any operator monotone function $f \colon (0, \infty) \rightarrow \mathbb{R}$ with $f(1)=0, f'(1)=1$, the perspectives $x\in\mathbb{P} \mapsto f_x(y):=x^{1/2}f(x^{-1/2}yx^{-1/2})x^{1/2}$ and their integrals
          $$x \mapsto \int_\mathbb{P}f_x(y) \: d\mu(y),$$
          where $\mu\in\mathcal{P}(\mathbb{P})$ such that
\begin{equation}\label{eq:intAssumption}
\int_\mathbb{P}\|f_x(y)\| \: d\mu(y)<\infty,
\end{equation}
          for any $x\in\mathbb{P},$
     define monotone (order-preserving) flow on $\mathbb{P}$ through the first-order system
\begin{equation}\label{eq:coreode}
      {\dot x}(t) = \int_\mathbb{P}f_{x(t)}(y) \: d\mu(y)
\end{equation}
  for $t\geq 0$. 
  The generalized Karcher equation
   \begin{align*}
      \int_\mathbb{P}f_{x}(y) \: d\mu(y) = 0
  \end{align*}
  has a unique solution, cf. \cite[Theorem 6.5]{lekapalfia}, denoted by $\Lambda_f(\mu)$. By Proposition 6.6 and the paragraph preceding Corollary 6.8 in \cite{lekapalfia},
  $\Lambda_f(\mu)$ is monotone increasing in its variables: with respect to the pointwise order for real functions $f$, and with respect to the stochastic ordering of probability measures $\mu$. This monotonicity property carries over to resolvents $J^\mu_\lambda(z)$ associated with the ODE \eqref{eq:coreode}.
  In fact, recall that the generalized resolvent $J^\mu_\lambda(z)$, $\lambda > 0,$ is the unique equilibrium of the ODE
   \begin{equation}\label{eq:coreode2} \dot{x} = \frac{\lambda}{1+\lambda} \int_\mathbb{P} f_x(y) \: d \mu(y) + \frac{1}{1+\lambda} \log_x z =: F_{z, \lambda}(x).
   \end{equation}
  From Proposition 2.8 in \cite{lekapalfia}, both dynamical systems
    $$ {\dot x}(t)  = \int_\mathbb{P} f_{x(t)}(y) \: d \mu(y) \; \text{ and } \;  {\dot x}(t) = \log_{x(t)} z $$
   generate an order-preserving flow on $\mathbb{P}$, so does their convex combination $F_{z, \lambda}$.
   Following the reasoning in Proposition 6.6 in \cite{lekapalfia}, we have that the order-interval $[J^\mu_\lambda(z), \infty)$ is an invariant set of the dynamical system \eqref{eq:coreode2}. Since $F_{z, \lambda} \leq F_{v, \lambda}$ holds when $z \leq v$, we also obtain that $0 = F_{z, \lambda}(J^\mu_\lambda(z)) \leq F_{v, \lambda}(J^\mu_\lambda(z))$ and then the flow generated by $F_{v, \lambda}$ leaves the set   $[J^\mu_\lambda(z), \infty)$ invariant, hence $J^\mu_\lambda(z) \leq J^\mu_\lambda(v)$ must follow.

     The order-preserving property of the resolvent in terms of generalized Karcher equations was first proved in \cite{palfia2} and also follows from the order-preserving properties of the associated flows (see \cite{lekapalfia}). The statement summarizing this is the following:
\begin{prop}\label{P:monotoneResolvent}
The generalized resolvent $J^\mu_\lambda(z)$ is monotone increasing with respect to the stochastic order of probability measures $\mu$ and the Loewner order on $z$.
\end{prop}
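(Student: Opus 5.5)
The plan is to treat the two monotonicity claims separately and then combine them, using in both cases the comparison principle for order-preserving flows in the form already sketched before the statement. Throughout, $J^\mu_\lambda(z)$ is the unique equilibrium of the vector field $F_{z,\lambda}$ in \eqref{eq:coreode2}. By \cite[Proposition 2.8]{lekapalfia}, both $x\mapsto \int_\mathbb{P} f_x(y)\,d\mu(y)$ and $x\mapsto \log_x z$ generate order-preserving flows, so their convex combination $F_{z,\lambda}$ does as well. Since $F_{z,\lambda}$ is also exponentially contractive, its flow converges in $d_\infty$ to $J^\mu_\lambda(z)$ from every initial point.

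\textbf{Monotonicity in $z$.} Let $z\le v$ and set $a:=J^\mu_\lambda(z)$. Since $\log_x$ is Loewner-monotone, we have $F_{z,\lambda}\le F_{v,\lambda}$ pointwise, and hence $F_{v,\lambda}(a)\ge F_{z,\lambda}(a)=0$. Following the argument of \cite[Proposition 6.6]{lekapalfia}, a point at which an order-preserving (quasi-monotone) vector field is nonnegative generates an increasing trajectory. Therefore the order interval $[a,\infty)$ is forward invariant under the flow of $F_{v,\lambda}$. Start that flow at $a$; it converges to $J^\mu_\lambda(v)$. The Loewner cone is closed in the Thompson topology, so $a\le J^\mu_\lambda(v)$.

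\textbf{Monotonicity in $\mu$.} Let $\mu\preceq\nu$ in the stochastic order. By the Strassen-type characterization used in \cite{lekapalfia} (the paragraph preceding Corollary 6.8), we may realize a coupling $(Y,Y')$ with $Y\sim\mu$, $Y'\sim\nu$ and $Y\le Y'$ almost surely. For fixed $x$ the perspective $y\mapsto f_x(y)$ is operator monotone, because $f$ is operator monotone and congruence preserves order. Hence
$$\int_\mathbb{P} f_x(y)\,d\mu(y)=\mathbb{E} f_x(Y)\le \mathbb{E} f_x(Y')=\int_\mathbb{P} f_x(y)\,d\nu(y).$$
Consequently $F^\mu_{z,\lambda}\le F^\nu_{z,\lambda}$ pointwise, and the same invariance argument gives $J^\mu_\lambda(z)\le J^\nu_\lambda(z)$. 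Combining the two steps, $J^\mu_\lambda(z)\le J^\nu_\lambda(z)\le J^\nu_\lambda(v)$ whenever $\mu\preceq\nu$ and $z\le v$.

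\textbf{Main obstacle.} The delicate point is the invariance claim: if an order-preserving flow has $F(a)\ge 0$, then $[a,\infty)$ is forward invariant. In infinite dimensions this is a Kamke-type comparison in a cone with nonempty interior, namely the positive semidefinite operators. I would prove it in two steps. First, $t\mapsto x_a(t)$ is increasing, since for small $h$ we have $x_a(h)\ge a$ by the positivity of $F(a)$, perturbed to strict positivity by adding $\epsilon I$ and letting $\epsilon\to0$. Second, the flow is order-preserving, so $x_a(t+h)=\Phi_t(x_a(h))\ge \Phi_t(a)=x_a(t)$. This is exactly the reasoning that \cite[Proposition 6.6]{lekapalfia} provides, and I would invoke it directly. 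A secondary check is that the integrability condition \eqref{eq:intAssumption} is preserved under the coupling. It is only needed so that both integrals are finite, and that is assumed for both measures.
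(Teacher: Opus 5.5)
Your proof is correct and follows the paper's route: $J^\mu_\lambda(z)$ is the equilibrium of the exponentially contractive, order-preserving field $F_{z,\lambda}$; the pointwise inequality $F_{z,\lambda}\le F_{v,\lambda}$ (resp.\ $F^\mu_{z,\lambda}\le F^\nu_{z,\lambda}$) makes $[J^\mu_\lambda(z),\infty)$ forward invariant for the larger field, so the limit of that flow lies in this closed order interval. The only differences are these. You spell out the $\mu$-part via a monotone coupling and the operator monotonicity of $y\mapsto f_x(y)$, which the paper leaves implicit in the comparison argument for $\Lambda_f$ in \cite[Proposition 6.6]{lekapalfia}. In your optional sketch of the invariance step, the limit $\epsilon\to0$ must be taken only after proving $x^\epsilon_a(t)\ge a$ for all $t$ for the flow of $F+\epsilon I$ (which is still order-preserving), because the ``small $h$'' window depends on $\epsilon$.
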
   
   
The exponential metric increasing (EMI) property says that for any $x$ and $y \in \mathbb{P}$, the inequality holds
 $$ \|\log x - \log y\| \leq d_\infty(x,y),$$
which readily implies
 $$  \|\log x^{-1/2}yx^{-1/2} - \log x^{-1}\| \leq \|\log y\|. $$
The following is a localized version of the statement for general operator-monotone functions.

\begin{prop}\label{P:concave_lip}
   Let $f \colon (0,\infty) \rightarrow \mathbb{R}$ be an operator monotone function such that $f(1) = 0.$ Then, for any $x$ and $y \in \mathbb{P},$ the inequality
    $$\|f(x^{-1/2}yx^{-1/2}) - f(x^{-1})\| \leq K_x \|f(y)\|$$
   holds with $K_x = \max \left\{ \|x^{-1}\|,\|x\| \right\}.$
\end{prop}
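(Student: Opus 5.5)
The plan is to reduce the operator inequality to a scalar inequality. The reduction uses the operator monotonicity of $f$ together with Löwner order sandwiching, and the scalar inequality is then handled through the integral representation of operator monotone functions.

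\textbf{Step 1: sandwiching.} Set $a=\min\sigma(y)$ and $b=\max\sigma(y)$. Since $y\in\mathbb{P}$, we have $0<a\le b$ and $aI\le y\le bI$. If $f$ is constant, then $f\equiv 0$ and there is nothing to prove, so assume it is not. Then $f$ is increasing, hence $\|f(y)\|=\max\{|f(a)|,|f(b)|\}=:c$. Congruence gives $a x^{-1}\le x^{-1/2}yx^{-1/2}\le b x^{-1}$, and operator monotonicity of $f$ then yields
$$ f(ax^{-1})-f(x^{-1})\le f(x^{-1/2}yx^{-1/2})-f(x^{-1})\le f(bx^{-1})-f(x^{-1}). $$
For a self-adjoint $T$ with $A\le T\le B$ we have $\|T\|\le\max\{\|A\|,\|B\|\}$. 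The two outer operators are functions of the single operator $x^{-1}$, so by spectral calculus it suffices to prove the scalar bound
$$ |f(rs)-f(s)|\le \max\{s,1/s\}\,|f(r)| \qquad (r,s>0). $$
We then apply it with $r\in\{a,b\}$ and $s\in\sigma(x^{-1})\subset[\|x\|^{-1},\|x^{-1}\|]$. On this range $\max\{s,1/s\}\le K_x$ and $|f(r)|\le c$.

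\textbf{Step 2: the scalar bound.} I would use Löwner's representation
$$ f(s)=\alpha+\beta s+\int_0^\infty\Bigl(\tfrac{\lambda}{\lambda^2+1}-\tfrac{1}{s+\lambda}\Bigr)\,dm(\lambda), $$
with $\beta\ge0$ and $m\ge0$ a positive measure (the point at $\lambda=0$ is included in the measure). This gives
$$ f(rs)-f(s)=(r-1)\Bigl[\beta s+\int_0^\infty \frac{s}{(rs+\lambda)(s+\lambda)}\,dm(\lambda)\Bigr]. $$
Using $f(1)=0$, we have $f(r)=f(r\cdot1)-f(1)$, which is the same expression evaluated at $s=1$. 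Every integrand has the same sign as $r-1$. It therefore suffices to compare the two brackets termwise, showing
$$ \frac{s\,(r+\lambda)(1+\lambda)}{(rs+\lambda)(s+\lambda)}\le\max\{s,1/s\}, \qquad \beta s\le\max\{s,1/s\}\,\beta. $$
For $s\le1$, both $(r+\lambda)/(rs+\lambda)$ and $(1+\lambda)/(s+\lambda)$ are at most $1/s$, so the product is at most $s\cdot s^{-2}=1/s$. For $s\ge1$, both factors are at most $1$, so the product is at most $s$. Integrating these termwise comparisons gives $|f(rs)-f(s)|\le\max\{s,1/s\}|f(r)|$, uniformly in $r$.

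\textbf{Main obstacle.} The main difficulty is Step 2. Concavity of $f$ alone only controls the case $s\ge1$, by comparing slopes on $[s,rs]$ and $[1,r]$. For $s<1$ the shorter interval $[s,rs]$ sits further left, where $f$ may be steep, so a genuinely operator-monotone (Pick function) structure is needed. The termwise factorization of the Löwner kernel above is the device I expect to handle both regimes at once.

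The remaining details are routine. One must check that the constant term $\alpha$ cancels and that the measure-zero atom at $\lambda=0$, corresponding to a $-1/s$ term, fits the same kernel estimate. It does, since that is the case $\lambda=0$ of the computation.
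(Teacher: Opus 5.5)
Your proposal is correct and follows essentially the paper's route: sandwich $y$ between $aI$ and $bI$, use operator monotonicity of $f$ to reduce to the scalar multiples $a x^{-1}$ and $b x^{-1}$, and bound $f(rs)-f(s)$ by comparing the kernels of the integral representation termwise with those of $f(r)-f(1)$. The differences are only cosmetic: you treat the regime $s\ge 1$ (the paper's case $x\le I$) by the same kernel comparison instead of passing to $g(t)=-f(1/t)$, you reduce $x^{-1}$ to scalars by functional calculus rather than splitting $x$ with a spectral projection, and you keep the linear term $\beta t$ explicit, whereas the paper's representation leaves it implicit.
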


\begin{proof}
   First, we prove the statement if $y$ is a positive scalar; that is, $y = \lambda > 0.$
   Assume that $1 \leq \lambda$ holds. From the integral representation of operator monotone functions
   $$0 \leq f(\lambda x^{-1}) - f(x^{-1}) = \int_{[0,\infty]} -{x \over \lambda + sx} + {x \over 1 + sx} \: d\nu(s) = \int_{[0,\infty]} {x(\lambda-1) \over (\lambda + sx)( 1 + sx)} \: d\nu(s), $$
   where $\nu$ is a positive Borel measure on $[0,\infty]$. If $x \in \mathbb{P}$ satisfies $1 \leq x$, we obtain
    $$  \int_{[0,\infty]} {x(\lambda-1) \over (\lambda + sx)( 1 + sx)} \: d\nu(s) \leq \int_{[0,\infty]} {x(\lambda-1) \over (\lambda + s)( 1 + s)} \: d\nu(s) = x(f(\lambda) - f(1)). $$
Similarly, we obtain that for $0 < \lambda \leq 1 $ and $1 \leq x \in \mathbb{P}$
 $$0 \leq f(x^{-1}) - f(\lambda x^{-1}) \leq x(f(1)-f(\lambda)),$$
hence, the norm inequality follows in this case.

If $1 \geq x$ and $1 \in \mathbb{P}$, we recall that $g(x) = -f(1/x)$ is also operator monotone. From the inequalities above for $g$, we obtain that
  $$\|f(x^{-1}) - f(\lambda x^{-1})\| = \|g(x) - g(x/\lambda)\| \leq \|x^{-1}\| |g(1/\lambda)| = \|x^{-1}\| |f(\lambda)|.$$

For a general $x \in \mathbb{P},$ let $p$ denote the spectral projection of $x$ corresponding to the Borel set $\sigma(x) \cap [1,\infty).$ With the notation $x_1 = px$ and $x_2 = (1-p)x$, we have
$f(x) = f(x_1) \oplus f(x_2).$
Then the preceding argument shows that
$$\|f(\lambda x^{-1}) - f(x^{-1}) \| = \max_i \|f(\lambda x_i^{-1}) - f(x_i^{-1}) \|  \leq \max (\|x^{-1}\| ,\|x\|)  |f(\lambda)|.$$

Finally, let $[\lambda_a, \lambda_b]$ denote the spectrum of any fixed $y \in \mathbb{P}.$   From the spectral mapping theorem, $\lambda_a \leq y \leq \lambda_b$ and $\|f(y)\| = \max(|f(\lambda_a)|,|f(\lambda_b)| ).$ Since $f$ is operator monotone, we also have
\begin{equation*}
\begin{split}
f(x^{-1/2}yx^{-1/2}) - f(x^{-1}) &\leq f(\lambda_b x^{-1}) - f(x^{-1}) \leq \max(\|x^{-1}\|, \|x\|)|f(\lambda_b)|\\
&\leq \max(\|x^{-1}\|, \|x\|)\|f(y)\|.
\end{split}
\end{equation*}
Similarly,
\begin{equation*}
\begin{split}f(x^{-1}) - f(x^{-1/2}yx^{-1/2})  &\leq  f(x^{-1}) - f(\lambda_a x^{-1}) \leq \max(\|x^{-1}\|, \|x\|)|f(\lambda_a)| \\
&\leq \max(\|x^{-1}\|, \|x\|)\|f(y)\|.
\end{split}
\end{equation*}
This completes the proof.
\end{proof}

\begin{cor}\label{C:UniformInt}
    Let $f \colon (0,\infty) \rightarrow \mathbb{R}$ be an operator monotone function such that $f(1) = 0$ and  $$\int_\mathbb{P} \|f(y)\| \: d\mu(y) < \infty. $$
    Then, for any $x \in \mathbb{P}$ and $\varepsilon > 0,$ the family of functions $$\mathcal{F}_{x,\varepsilon} = \left\{  \|f(z^{-1/2}yz^{-1/2})\| \colon d_\infty(x,z) < \varepsilon \right\}$$ admits a common $L^1$-dominating function.
\end{cor}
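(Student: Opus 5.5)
Apply Proposition~\ref{P:concave_lip} with $x$ replaced by $z$. For every $z\in\mathbb{P}$ and $y\in\mathbb{P}$ the triangle inequality gives
$$\|f(z^{-1/2}yz^{-1/2})\| \leq \|f(z^{-1})\| + K_z\|f(y)\|, \qquad K_z=\max\{\|z^{-1}\|,\|z\|\}.$$
So it suffices to bound $\|f(z^{-1})\|$ and $K_z$ uniformly over the ball $\{z\colon d_\infty(x,z)<\varepsilon\}$. Then the function
$$g(y)= M_1 + M_2\|f(y)\|$$
is a common dominating function for $\mathcal{F}_{x,\varepsilon}$, where $M_1=\sup_z\|f(z^{-1})\|$ and $M_2=\sup_z K_z$. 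It lies in $L^1(\mu)$ because $\mu$ is a probability measure and $\int\|f(y)\|\,d\mu(y)<\infty$ by assumption.

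To get the uniform bounds, use the definition of the Thompson metric. The condition $d_\infty(x,z)<\varepsilon$ means $\|\log(x^{-1/2}zx^{-1/2})\|<\varepsilon$, i.e.
$$e^{-\varepsilon}x\leq z\leq e^{\varepsilon}x .$$
Hence $\|z\|\leq e^{\varepsilon}\|x\|$. Inverting reverses the order, so $e^{-\varepsilon}x^{-1}\leq z^{-1}\leq e^{\varepsilon}x^{-1}$ and $\|z^{-1}\|\leq e^{\varepsilon}\|x^{-1}\|$. This gives
$$M_2\leq e^{\varepsilon}\max\{\|x\|,\|x^{-1}\|\}.$$

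For $M_1$, note that the spectrum of $z^{-1}$ lies in the compact interval
$$I=[\,e^{-\varepsilon}\|x\|^{-1},\; e^{\varepsilon}\|x^{-1}\|\,]\subset(0,\infty).$$
This follows from the order bounds above, since $\lambda_{\min}(x^{-1})=\|x\|^{-1}$. An operator monotone function is continuous on $(0,\infty)$, so $f$ is bounded on $I$. By the spectral mapping theorem,
$$\|f(z^{-1})\|\leq \max_{t\in I}|f(t)|=:M_1<\infty.$$
Alternatively, monotonicity of $f$ gives $\|f(z^{-1})\|\leq\max\{|f(\min I)|,|f(\max I)|\}$ directly.

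There is no substantial obstacle here. The only point needing care is the uniform control of the spectra of $z$ and $z^{-1}$ over the Thompson ball, which is exactly what the order characterization of $d_\infty$ supplies. Note also that $\|f(y)\|$ itself need not be bounded, so the dominating function must be linear in $\|f(y)\|$ rather than a constant. This is why Proposition~\ref{P:concave_lip} is needed in place of a plain continuity argument.
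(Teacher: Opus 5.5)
Your proof is correct and follows the paper's argument: both apply Proposition~\ref{P:concave_lip} at the point $z$ and then use that $K_z$ and $\|f(z^{-1})\|$ stay bounded on the Thompson ball around $x$. The differences are minor. You use the explicit majorant $M_1+M_2\|f(y)\|$ rather than $\sup\mathcal{F}_{x,\varepsilon}$, so you need no lower-semicontinuity argument for measurability, and you verify the uniform bounds via $e^{-\varepsilon}x\le z\le e^{\varepsilon}x$, which the paper only asserts.
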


\begin{proof}
    The pointwise supremum $\sup \mathcal{F}_{x,\varepsilon}$ is lower semi-continuous, since each function is continuous in $\mathcal{F}_{x,\varepsilon}$; hence $\sup \mathcal{F}_{x,\varepsilon}$ is Borel measurable.
    From Proposition~\ref{P:concave_lip}, we obtain
     \begin{align*}
        \int_\mathbb{P}& \sup \mathcal{F}_{x,\varepsilon} \: d\mu  \leq \sup_{d_\infty(x,z) < \varepsilon} K_z \int_\mathbb{P} \|f(y)\| \: d\mu(y) + \sup_{d_\infty(x,z) < \varepsilon} \|f(z^{-1})\|.
     \end{align*}
      Since $$ \int_\mathbb{P} \|f(y)\| \, d\mu(y) < \infty $$ by assumption, and since both $K_z = \max \{\|z\|, \|z^{-1}\|\}$ and $ \|f(z^{-1})\| $ are bounded as $z $ ranges over an open neighborhood of $ x$, the above expression is finite. Thus, $\sup \mathcal{F}_{x,\varepsilon} \in L^1(\mu)$, and it admits the $L^1$-domination of $\mathcal{F}_{x, \varepsilon}$.
\end{proof}

\begin{cor}\label{C:LlogLInt}
    Let $f \colon (0,\infty) \rightarrow \mathbb{R}$ be an operator monotone function such that $f(1) = 0$ and  $$\int_\mathbb{P} \|f(y)\| \log^+ \|f(y)\| \: d\mu(y) < \infty. $$
    For any $x \in \mathbb{R}_+$, $$ f\left({\cdot\over x}\right)  \in L \log^+ L(\mathbb{R}_+,  \|\cdot\|_{\#}\mu).$$
   \end{cor}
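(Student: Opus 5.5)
The plan is to reduce to the scalar case of Proposition~\ref{P:concave_lip} and then use a growth estimate for $u\log^+u$. I read $\|\cdot\|_{\#}\mu$ as the push-forward of $\mu$ under $y\mapsto\|y\|$, which is a Borel probability measure on $\mathbb{R}_+$. The claim then amounts to
$$\int_{\mathbb{P}} \Bigl|f\Bigl(\frac{\|y\|}{x}\Bigr)\Bigr| \log^+ \Bigl|f\Bigl(\frac{\|y\|}{x}\Bigr)\Bigr| \, d\mu(y)<\infty .$$
Measurability is not an issue, since $t\mapsto f(t/x)$ is continuous on $(0,\infty)$.

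\textbf{Step 1 (comparison with $\|f(y)\|$).} Let $[\lambda_a,\lambda_b]$ be the spectrum of $y$, so that $\lambda_b=\|y\|$. Since $f$ is operator monotone, it is in particular increasing, and the spectral mapping theorem (as in the proof of Proposition~\ref{P:concave_lip}) gives $\|f(y)\|=\max(|f(\lambda_a)|,|f(\lambda_b)|)$. Hence
$$|f(\|y\|)|\le \|f(y)\| .$$

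\textbf{Step 2 (scaling).} Apply Proposition~\ref{P:concave_lip} in the scalar case, with the positive scalar $x$ in place of the operator $x$ and $t=\|y\|$ in place of $y$. With $K_x=\max\{x,1/x\}$ this gives
$$|f(t/x)-f(1/x)|\le K_x|f(t)| .$$
Combining with Step 1, and setting $c_x:=|f(1/x)|$, we get
$$|f(\|y\|/x)|\le K_x\|f(y)\|+c_x .$$

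\textbf{Step 3 (the Orlicz estimate).} The function $u\mapsto u\log^+u$ is nondecreasing on $[0,\infty)$, so it suffices to bound $(K_xu+c_x)\log^+(K_xu+c_x)$ by $C(u\log^+u+1)$ for all $u\ge0$, with $C$ depending only on $x$.
\begin{itemize}
\item For $u\le e$ the left side is bounded by a constant.
\item For $u\ge e$, set $M=K_x+c_x$. Then $K_xu+c_x\le Mu$, so the left side is at most $Mu(\log u+\log^+ M)\le M(1+\log^+M)\,u\log u$, using $\log u\ge1$.
\end{itemize}
Integrating against the probability measure $\mu$ and using the hypothesis $\int\|f(y)\|\log^+\|f(y)\|\,d\mu<\infty$ gives the claim.

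The only point that needs care is Step 1. It requires noting that the scalar value at the top of the spectrum is controlled by the operator norm of $f(y)$, and this uses the monotonicity of $f$ only on the real line. The rest is routine.
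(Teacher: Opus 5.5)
Your proof is correct and is essentially the paper's argument. It uses the same ingredients: Proposition~\ref{P:concave_lip} with a scalar $x$, the spectral mapping theorem at the top of the spectrum of $y$, monotonicity of $u\mapsto u\log^+u$, and an elementary growth bound for $\log^+$ of an affine expression; only the order differs. The paper first proves $\|f(y/x)\|\le K_x\|f(y)\|+|f(1/x)|$ at the operator level and then uses $|f(\|y\|/x)|\le\|f(y/x)\|$, whereas you pass to the scalar $\|y\|$ first and so need only the scalar case of the proposition. Incidentally, your Step~1 does not need monotonicity of $f$: $\|y\|\in\sigma(y)$ and $\|f(y)\|=\sup_{t\in\sigma(y)}|f(t)|$ suffice.
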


\begin{proof}
From Corollary~\ref{C:UniformInt}, the functions $$y \mapsto \left\|f\left({y \over x}\right) \right\| \; \text{ and } \; y \mapsto \log^+ \left\|f\left({y \over x}\right) \right\|$$ belong to $L^1(\mu)$. Using Proposition~\ref{P:concave_lip},
$$ \left\|f\left({y \over x}\right) \right\| \log^+ \left\|f\left({y \over x}\right) \right\| \leq \left(K_x \|f(y)\| + \left|f\left(\frac1x\right)\right| \right) \log^+ \left(K_x \|f(y)\| + \left|f\left(\frac1x\right)\right|\right).$$
Recall that, for any $a$ and $b > 0$,
  $$
\log^+(a+b)
\leq \log 2+\max\{\log^+a,\log^+b\}
\leq \log 2+\log^+a+\log^+b.
 $$
 Thus, we conclude that $\left\|f\left(\frac{\cdot}{x}\right) \right\| \in L \log^+ L(\mathbb{P}, \mu)$.

  The spectral mapping theorem gives that $f\left(\frac{\|y\|}{x}\right) \in \sigma\left(f(\frac{y}{x})\right)$, and then $$\left|f\left(\frac{\|y\|}{x}\right)\right| \leq \left\|\frac{f(y)}{x}\right\|.$$ Since the function $t \mapsto f(t) \log^+ f(t)$ is nondecreasing on $(0, \infty)$, it follows $$f\left(\frac{\cdot}{x} \right)  \in L \log^+ L(\mathbb{R}_+,  \|\cdot\|_{\#}\mu).$$

\end{proof}

In order to obtain a.s. boundedness of our resolvent iterates, in what follows we postulate a slightly weaker integrability condition than the $L^1$ condition of \eqref{eq:intAssumption}. It lies between an $L^1$ and $L^2$ condition and we believe that merely \eqref{eq:intAssumption} should be enough for a.s. boundedness, but we were not able to justify this. We believe that a more refined analysis might be required for this which takes into account the varying exponential contraction coefficients along the trajectories of the resolvents iterates. For the following proofs mere nonexpansivity is enough, but then the random harmonic series type error terms, as utilized below, grow too quickly for \eqref{eq:intAssumption} to be effective. The postulated additional integrability of $x\log^{+}(x)$ below is still a significantly weaker assumption than \cite[(26)]{lekapalfia} which essentially requires the existence of all moments of $d_\infty(I,Y_1)$ (equivalently $\mathbb{E}|Y_1|<\infty$) that has not much in common with $\|f(y)\|$, and a.s. lower and upper bounds required seemingly different integrability assumptions. In what follows we also unify the picture regarding the establishment of claimed a.s. lower and upper bounds. The key tool to obtain boundedness is the following a.s. convergence result.

\begin{prop}\label{P:ConvergentStochastic}
   Let $X_1, X_2, ... $ be a sequence of i.i.d. real-valued random variables, and let $h \colon \mathbb{R} \to \mathbb{R}$ be a non-decreasing function such that
 $$ \mathbb{E}[|h(X_1 - r)|\log^+ |h(X_1 - r)|] < \infty$$ for every real $r$.
Assume that the equation $$\mathbb{E}h(X_1 - x) = 0$$ has the unique solution $x=0$.
 For arbitrary initial values $s_1,\hat{s}_1  \in \mathbb{R}$, consider the implicit stochastic approximations
$$\frac1n h(X_n - s_n) + s_{n-1} = s_n $$
and
$$
\frac1n h(X_n - \hat{s}_n) + h(\hat{s}_{n-1}-\hat{s}_n) = 0.
$$
The resulting sequences $\{s_n\}_{n \geq 1}$ and $\{\hat{s}\}_{n \geq 1}$ both converge to $0$ almost surely, given that in the case of the second iteration $\{\hat{s}\}_{n \geq 1}$, we additionally have that $h^{-1}$ exists and satisfies $$h^{-1}(x)=x+O(x^2)$$ in a neighborhood of $0$.
\end{prop}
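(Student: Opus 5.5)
We treat the first iteration first and then reduce the second one to it. Since $h$ is non-decreasing, the map $s\mapsto s-\frac1n h(X_n-s)$ is strictly increasing and continuous; if it is also unbounded in both directions, $s_n$ is uniquely determined by $s_{n-1}$. Moreover, $s_n$ is monotone increasing in $s_{n-1}$, which is the real-line shadow of Proposition~\ref{P:monotoneResolvent}.

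Set $g(r)=\mathbb{E}h(X_1-r)$. This function is non-increasing. Since the equation $\mathbb{E}h(X_1-x)=0$ has the unique solution $x=0$, we get $g(r)>0$ for $r<0$ and $g(r)<0$ for $r>0$. The plan is to show $\limsup s_n\le \delta$ a.s. for every $\delta>0$; the bound $\liminf s_n\ge-\delta$ follows symmetrically.

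\textbf{Comparison step.} Fix $\delta>0$ and put $\xi_n=h(X_n-\delta)$. These are i.i.d. with mean $g(\delta)=:-\eta<0$, and $\xi_n\in L\log^+L$. Whenever $s_n\ge\delta$, monotonicity of $h$ gives
$$s_n=s_{n-1}+\tfrac1n h(X_n-s_n)\le s_{n-1}+\tfrac1n\xi_n .$$
Hence on each excursion above $\delta$, which starts at a time $m$ with $s_{m-1}<\delta$, we have
$$s_n\le \delta+\tfrac1m h(X_m-s_m)^+ +\sum_{j=m+1}^n \tfrac1j\xi_j .$$
The one-step jump at entry needs care: since $s_m\ge\delta$ on the excursion, $h(X_m-s_m)\le\xi_m$, so the jump is at most $\xi_m^+/m$.

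\textbf{Controlling the jump.} The $L\log^+L$ hypothesis (in fact $L^1$ suffices here) gives $\xi_m^+/m\to0$ a.s. by Borel--Cantelli, since $\sum_m P(\xi_1^+>\epsilon m)<\infty$.

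\textbf{Main obstacle: the random harmonic sums.} It remains to show that
$$\sup_{n\ge m}\sum_{j=m+1}^n \tfrac1j\xi_j\to0 \quad\text{as } m\to\infty, \text{ a.s.},$$
or at least that these tails are eventually $\le\delta$. Write $\xi_j=-\eta+\zeta_j$ with $\mathbb{E}\zeta_j=0$. The drift contributes $-\eta\log(n/m)$. So it suffices to show that the weighted partial sums $T_n=\sum_{j\le n}\zeta_j/j$ satisfy $|T_n-T_m|\le \eta\log(n/m)+o(1)$ uniformly in $n\ge m$. Under $L^2$ the series $T_n$ simply converges. Under $L^1$ alone it need not: the sum $\sum \zeta_j/j$ converges a.s. iff $\mathbb{E}|\zeta|\log^+|\zeta|<\infty$, a classical fact for i.i.d. weights $1/j$. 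I would prove this convergence directly. First truncate at level $j$, i.e. set $\zeta_j'=\zeta_j1_{|\zeta_j|\le j}$; the difference is summable by Borel--Cantelli, which needs only $L^1$. The centering corrections $\sum_j \mathbb{E}|\zeta|1_{|\zeta|>j}/j$ are finite exactly when $\mathbb{E}|\zeta|\log^+|\zeta|<\infty$. The variance sum $\sum_j \mathbb{E}\zeta^21_{|\zeta|\le j}/j^2\lesssim\mathbb{E}|\zeta|$ is finite, so Kolmogorov's two-series theorem applies. Once $T_n$ converges a.s., its tails vanish, and excursions above $\delta$ are eventually trapped below $2\delta$. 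In addition, the drift $-\eta\log(n/m)\to-\infty$ forces re-entry below $\delta$, so $\limsup s_n\le2\delta$.

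\textbf{Second iteration.} For $\hat s_n$, note that near $0$ the relation $h(\hat s_{n-1}-\hat s_n)=-\frac1n h(X_n-\hat s_n)$ gives
$$\hat s_n-\hat s_{n-1}=-h^{-1}\bigl(-\tfrac1n h(X_n-\hat s_n)\bigr)=\tfrac1n h(X_n-\hat s_n)+O\bigl(n^{-2}h(X_n-\hat s_n)^2\bigr).$$
The increments have the same sign structure as before by monotonicity. The same comparison therefore applies, with extra terms $O(\xi_j^2/j^2)$ evaluated on the truncated variables $\xi_j'$; these are summable a.s. as in the variance estimate. Large steps occur only finitely often by Borel--Cantelli. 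The remaining point is that $h^{-1}$ is defined on the relevant range: the monotone structure forces $\hat s_n$ to move in the direction of the sign of $h(X_n-\hat s_n)$, and a bounded move once $|\xi_n|/n$ is small.
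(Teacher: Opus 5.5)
Your proof is correct, and for the first iteration it takes a genuinely different route from the paper.

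\textbf{The paper's route.} The paper works at the root level $0$. It first proves a.s. convergence of the single mean-zero series $\sum_n h(X_n)/n$, and then splits into cases. If $s_n$ changes sign infinitely often, then on each block of constant sign starting at $n_0$ the correction terms $h(X_n-s_n)-h(X_n)$ have sign opposite to $s_n$. This gives $|s_M|\le|\sum_{n=n_0}^M h(X_n)/n|$ and absorbs the crossing step automatically. If $s_n$ is eventually of one sign, the correction sum is monotone and bounded, so $s_n\to L$. The case $L\neq 0$ is then excluded by Etemadi's weighted SLLN applied to $h(X_n-L/2)$.

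\textbf{Your route.} You compare at a shifted level $\delta>0$, where $\xi_n=h(X_n-\delta)$ has strictly negative mean $-\eta$, and you prove convergence of the centered series $\sum_n(\xi_n+\eta)/n$. The drift $-\eta\sum_j 1/j$ then does two jobs at once: it caps every late excursion above $\delta$ by $\delta+o(1)$, and it forces every excursion to end.

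\textbf{What each buys.} Your version removes the case distinction, the need to first show that $s_n$ converges, and the appeal to the weighted SLLN. All your comparison levels are deterministic; you should say explicitly that $\delta$ runs along a countable sequence tending to $0$. By contrast, the paper's level $L/2$ is random and strictly needs a countable-level refinement. The price is that you use the $L\log^+L$ hypothesis at $r=\pm\delta$, whereas the paper uses it only at $r=0$ and needs just $L^1$ at other levels. The assumption ``for every real $r$'' allows this.

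\textbf{Second iteration and a side remark.} For the second iteration your reduction coincides with the paper's. One precision: apply the expansion to $-h^{-1}(-\xi_n/n)$ after the monotone comparison, since $\frac1n h(X_n-\hat s_n)$ is not known to be small a priori. Separately, your remark that $\sum_j\zeta_j/j$ converges a.s. \emph{iff} $\mathbb{E}|\zeta|\log^+|\zeta|<\infty$ is false as an equivalence. For symmetric $\zeta\in L^1$ the truncated means vanish and the series converges. Only the sufficiency direction is used, so this does not affect your argument.
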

\begin{proof}
The proof will utilize the claim that
  $$ \sum_{n=1}^\infty \frac{h(X_n)}{n}$$
  converges almost surely. We will apply Kolmogorov’s three-series theorem to establish the claim, it therefore suffices to verify the following three conditions. First,
  $$ \sum_{n=1}^\infty P(|h(X_n)| > n) =  \sum_{n=1}^\infty P(|h(X_1)| > n) \leq \mathbb{E}|h(X_1)| < \infty.$$
  Second, from $\mathbb{E}h(X_1)=0$ and Fubini's theorem,
\begin{equation*}
\begin{split}
\sum_{n = 1}^\infty \frac{\mathbb{E}\left(h(X_{n})1_{\{|h(X_{n})| < n\}}\right)}{n}&= \sum_{n = 1}^\infty \frac{\mathbb{E}\left(h(X_{1}) 1_{\{|h(X_{1})| < n\}}\right)}{n} \\
&= -\sum_{n = 1}^\infty \frac{\mathbb{E}\left(h(X_{1})1_{\{|h(X_{1})|\geq n\}}\right)}{n}\\
&= -\mathbb{E} \left( h(X_{1}) \sum_{n = 1}^\infty \frac{1_{\{|h(X_{1})|\geq n\}}}{n} \right)\\
&\leq O\left(\mathbb{E}\left[|h(X_1)|\log^{+}|h(X_1)|\right]\right)<\infty.
\end{split}
\end{equation*}
Third, a standard estimate from \cite[p. 264]{Dudley} yields
\begin{equation}\label{eq:P:ConvergentStochastic}
\begin{split}
\sum_{n = 1}^\infty \frac{1}{n^2}\mathbb{E}\left(h(X_{n})^21_{\{|h(X_{n})|<n\}}\right)
&\leq \sum_{n = 1}^\infty \frac{1}{n^2}\int_0^n x^2 \: dF_{|h(X_{n})|}\\
&\leq O\left(\sum_{n = 0}^\infty \int_n^{n+1} x^2 \: dF_{|h(X_{n})|} \frac{1}{n+1}\right)\\
&\leq O\left(\sum_{n =  0}^\infty \int_n^{n+1} x \: dF_{|h(X_{n})|} \right)\leq O\left(\mathbb{E}|h(X_{1})|\right)<\infty,
\end{split}
\end{equation}
where $F_{|h(X_{n})|}$ denotes the distribution function of $|h(X_{n})|$.

  Thus, using Kolmogorov’s three-series theorem \cite[Theorem 2.16]{HallHeyde}, we obtain that the series
  $$ \sum_{n = 1}^\infty \frac{h(X_n)}{n}$$
  converges almost surely.

  Let $\{X_n\}_{n=1}^\infty$ be a sequence appearing in the convergent harmonically weighted series. First, assume that the corresponding sequence $\{s_n\}_{n=1}^\infty$ changes sign infinitely often. Let $n_0$ be an index such that $s_{n_0-1} \geq 0$ and $s_{n_0},..., s_M < 0$. Summing the equations of the implicit recursion, we obtain
\begin{equation}\label{eq1:GPT}
s_M = s_{n_0-1} + \sum_{n=n_0}^M \frac{h(X_n)}{n} + \sum_{n=n_0}^M \frac1n (h(X_n - s_n) - h(X_n)).
\end{equation}
 Since $h$ is non-decreasing, the last sum is clearly nonnegative. From $s_M < 0$ and $s_{n_0-1} \geq 0$,
  $$ |s_M| \leq \left|\sum_{n=n_0}^M \frac{h(X_n)}{n}\right|$$
  must hold.
 We can assume that $n_0$ is arbitrarily large, and then the sum of the right-hand side is also uniformly arbitrarily small, which implies that $s_M$ is arbitrarily small for sufficiently large $M$. Similar reasoning applied to a block of positive terms in $\{s_n\}_{n=1}^\infty$ implies that the sequence $\{s_n\}_{n=1}^\infty$ must converge to $0$.

Next, assume that $\{s_n\}_{n=1}^\infty$ is positive for every index $n \geq n_0$. From the equality
     $$ s_M = s_{n_0-1} + \sum_{n=n_0}^M \frac{h(X_n)}{n} + \sum_{n=n_0}^M \frac1n (h(X_n - s_n) - h(X_n))$$
we obtain that $s_M$ must be a convergent sequence when $M \to \infty$. Indeed, the sequence $\displaystyle \sum_{n=n_0}^M \frac{h(X_n)}{n}$ is clearly convergent when $M \to \infty$. Furthermore, $\sum_{n=n_0}^M \frac1n (h(X_n - s_n) - h(X_n)$ is a bounded nonincreasing sequence because $s_n \geq 0$ and
    $$ \sum_{n=n_0}^M \frac1n (h(X_n - s_n) - h(X_n)) = s_M - s_{n_0-1} - \sum_{n=n_0}^M \frac{h(X_n)}{n} \geq - s_{n_0-1} - \sum_{n=n_0}^M \frac{h(X_n)}{n} > -\infty;$$ hence it is convergent. As a sum of convergent sequences, ${s_M}$ also converges; denote its limit by $L$. We prove that $L=0$ must hold. Let us assume that $L \neq 0$, for instance $L > 0$. Then, for every large index $n$, $L/2 \leq s_n$ holds. We also have
 $$\mathbb{E}h(X_n  - s_n) \leq \mathbb{E}h(X_n - L/2) =: \mu < \mathbb{E}h(X_n) = 0.$$
Then, summing the equations of recursion from $n_0$ to $M$, we obtain
 $$ \sum_{n=n_0}^M \frac1n h(X_n - s_n) = s_M - s_{n_0}.$$
Using Etemadi's weighted SLLN \cite[Theorem 1]{Etemadi},
 $$   \sum_{n=n_0}^M \frac1n h(X_n - s_n) \leq  \sum_{n=n_0}^M \frac1n h(X_n - L/2) \sim \log (M-n_0) \mu;$$
hence,
 $$ s_M - s_{n_0} = \sum_{n=n_0}^M \frac1n h(X_n - s_n) \lesssim  \log(M-n_0) \mu \to -\infty, \quad M \to \infty.$$
 Since left-hand side is bounded, our assumption leads to a contradiction, hence $L=0$ must hold. Similarly, if $\{s_n\}_{n=1}^\infty$ is negative for every index $n \geq n_0$, we obtain that same conclusion. This completes the proof for the sequence $\{s_n\}_{n \geq 1}$.
 
 For the second iteration $\hat{s}_n$, since $h$ is increasing, so is $h^{-1}$. Therefore,
\begin{equation*}
 -h^{-1}\left(-\frac1n h(X_n - \hat{s}_n)\right) + \hat{s}_{n-1}=\hat{s}_n
\end{equation*} 
and $-h^{-1}\left(-\frac1n h(x)\right)$ is also increasing. Summing the equations from $n_0$ to $M$, we similarly obtain
\begin{equation}\label{eq2:GPT}
\begin{split}
\hat{s}_M &= s_{n_0-1} + \sum_{n=n_0}^M -h^{-1}\left(-\frac{h(X_n)}{n}\right)\\
 &\quad + \sum_{n=n_0}^M -h^{-1}\left(-\frac1n (h(X_n - \hat{s}_n)\right) + h^{-1}\left(-\frac1n h(X_n))\right).
\end{split}
\end{equation}
The Borel--Cantelli lemma implies that $\{|X_n| > n\}$ occurs finitely many time. Hence,
the inequality \eqref{eq:P:ConvergentStochastic} yields
 $$\sum_{n=1}^\infty\frac{h(X_n)^2}{n^2} $$ is convergent a.s..
Using $h^{-1}(x)=x+O(x^2)$ in a neighborhood of $0$, we have
\begin{equation*}
\sum_{n=n_0}^M -h^{-1}\left(-\frac{h(X_n)}{n}\right)=\sum_{n=n_0}^M \frac{h(X_n)}{n}+O\left(\frac{h(X_n)^2}{n^2}\right),
\end{equation*}
that is convergent a.s. when $n_0$ is sufficiently large and $M \to \infty$. Thus, using \eqref{eq2:GPT} in place of \eqref{eq1:GPT} we can follow the proof above for $s_n$ to obtain that $\hat{s}_n$ converges to some $L\in\mathbb{R}$ a.s.. Similarly, assuming first that $0 < L$, we have that
\begin{equation*}
\begin{split}
\hat{s}_M - \hat{s}_{n_0} &= \sum_{n=n_0}^M -h^{-1}\left(-\frac1n h(X_n - \hat{s}_n)\right)\leq \sum_{n=n_0}^M -h^{-1}\left(-\frac1n h(X_n - L/2)\right)\\
&\leq \sum_{n=n_0}^M \frac{h(X_n-L/2)}{n}+O\left(\frac{h(X_n-L/2)^2}{n^2}\right)\\
&\lesssim  \log(M-n_0) \mathbb{E}h(X_n-L/2)+\sum_{n=n_0}^M O\left(\frac{h(X_n-L/2)^2}{n^2}\right) \to -\infty, \quad M \to \infty
\end{split}
\end{equation*}
a.s., because, as established earlier, $\mathbb{E}h(X_n-L/2)<0$ and $\sum_{n\geq n_0}\frac{1}{n^2}h(X_{n}-r)^2<\infty$ a.s.. Then, the same arguments establishing that $L=0$ for $s_n$ a.s. implies that $L=0$ a.s. for $\hat{s}_n$ as well. The proof is complete.
\end{proof}

\begin{lemma}\label{L:bndd2}
 Let $f$ be an operator monotone function on $(0, \infty)$ such that $f(1) = 0$ and $f'(1) = 1$.
 Let $Y_1, Y_2, \hdots$  be a sequence of i.i.d. random variables with probability law $\mu$ on $\mathbb{P}$ such that
     $$ \int_\mathbb{P} \|f(y)\|\log^{+}\|f(y)\| \: d\mu(y) < \infty.$$ 
     For any initial $S_1\in\mathbb{P}$, the stochastic resolvent iteration $$S_{n+1}=J_{1 \over n+1}(Y_{n+1},S_n) \qquad n \geq 1$$ is almost surely bounded.
\end{lemma}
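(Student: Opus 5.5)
We reduce the boundedness of $S_n$ to the scalar implicit stochastic approximation of Proposition~\ref{P:ConvergentStochastic}. The tools are the order-preserving property of the resolvent (Proposition~\ref{P:monotoneResolvent}) and a sandwich of each $Y_n$ between scalar multiples of the identity. For $y\in\mathbb{P}$ we have $\lambda_a(y) I\le y\le \|y\| I$, where $\lambda_a(y)=\|y^{-1}\|^{-1}$. We also have $S_1\le \sigma_1 I$ and $S_1\ge \tau_1 I$ for suitable scalars $\sigma_1,\tau_1>0$. Define scalar resolvent iterations
$$u_{n+1}=J_{1/(n+1)}(\|Y_{n+1}\| I,u_n),\qquad \ell_{n+1}=J_{1/(n+1)}(\lambda_a(Y_{n+1})I,\ell_n),$$
with $u_1=\sigma_1 I$ and $\ell_1=\tau_1 I$. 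By Proposition~\ref{P:monotoneResolvent}, monotone both in the measure $\delta_{Y}$ (stochastic order, here pointwise order) and in $z$, an induction gives $\ell_n\le S_n\le u_n$ for all $n$. Bounded $d_\infty$-distance from $I$ is exactly a two-sided Loewner bound, so it suffices to show $u_n$ and $\ell_n$ stay bounded away from $0$ and $\infty$ a.s.

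Since all arguments of the scalar iterations are multiples of $I$, the resolvent equation $\lambda f_x(y)+\log_x z=0$ reduces to a scalar equation. Write $x=e^{s}$, $y=e^{X}$, $z=e^{t}$. Then $f_x(y)=e^{s}f(e^{X-s})$ and $\log_x z=x(t-s)$ (scalar case), so the equation becomes
$$\frac1{n+1}f(e^{X_{n+1}-s_{n+1}})+s_n-s_{n+1}=0.$$
This is the first iteration of Proposition~\ref{P:ConvergentStochastic} with $h(t)=f(e^t)$, which is non-decreasing, and with $X_n=\log\|Y_n\|$ (resp.\ $X_n=\log\lambda_a(Y_n)$). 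The proposition needs a unique root of $\mathbb{E}h(X_1-x)=0$, normalized to $0$. The function $x\mapsto\mathbb{E} f(e^{X_1-x})$ is non-increasing and continuous. We argue it is strictly decreasing with a root, as in the scalar case of \cite[Theorem 6.5]{lekapalfia}, using that $f$ is strictly increasing because $f'(1)=1$ and $f$ is operator monotone. We then shift $X_n$ by that root $r$. Shifting does not affect boundedness. Convergence $s_n\to r$ a.s.\ then gives boundedness of $u_n$, and likewise of $\ell_n$.

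The main obstacle is verifying the moment hypothesis $\mathbb{E}[|h(X_1-r)|\log^+|h(X_1-r)|]<\infty$ for every $r$. This says $f(\|Y\|/x)$ and $f(\lambda_a(Y)/x)$ lie in $L\log^+L$ for every $x>0$. For $\|Y\|$ this is essentially Corollary~\ref{C:LlogLInt}, which gives $f(\cdot/x)\in L\log^+L$ under the push-forward $\|\cdot\|_\#\mu$. For $\lambda_a(Y)$ the same argument applies: the spectral mapping theorem gives $|f(\lambda_a(y)/x)|\le\|f(y/x)\|$, and Proposition~\ref{P:concave_lip} with $x$ a scalar controls $\|f(y/x)\|$ by $K_x\|f(y)\|+|f(1/x)|$. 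The $\log^+(a+b)$ inequality from the proof of Corollary~\ref{C:LlogLInt} then finishes it. A secondary point is that the pointwise ordering $\lambda_a(Y_n)I\le Y_n\le\|Y_n\|I$ is what Proposition~\ref{P:monotoneResolvent} needs for Dirac measures. We check that it holds, since the stochastic order of $\delta_a,\delta_b$ reduces to $a\le b$.
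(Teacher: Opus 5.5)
Your proposal is correct and follows the paper's proof: you dominate $S_n$ by scalar resolvent iterations using Proposition~\ref{P:monotoneResolvent}, substitute exponentially to get the first iteration of Proposition~\ref{P:ConvergentStochastic} with $h=f\circ\exp$, and get the $L\log^+L$ hypothesis from Corollary~\ref{C:LlogLInt} and Proposition~\ref{P:concave_lip}. The one difference is the lower bound: you sandwich $Y_n$ from below by $\|Y_n^{-1}\|^{-1}I$ and run the $f$-iteration, whereas the paper inverts and runs the $g(x)=-f(1/x)$ iteration with $\|Y_n^{-1}\|$; your $\ell_n$ is exactly the reciprocal of the paper's sequence, so the two arguments coincide.
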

\begin{proof}
  To prove that the iteration
    \begin{equation}\label{eq1:L:bndd2}
   {1 \over n+1} f_{S_{n+1}}(Y_{n+1}) + \log_{S_{n+1}}(S_n) = 0 \qquad n \geq 1,
   \end{equation}
   generates an almost surely bounded sequence, we reduce the problem to the stability of an implicit stochastic approximation on the real line.
   
   Let us consider the following implicit stochastic approximation algorithm defined by $s_1:=\|S_1\|$ and
    \begin{equation}\label{eq3:L:bndd2}
    {1 \over n+1} f_{s_{n+1}}(\|Y_{n+1}\|) + \log_{s_{n+1}}\{s_n\} = 0 \qquad n \geq 1.
   \end{equation}
   Since $Y_{n+1}\leq \|Y_{n+1}\|I$ and $S_1\leq s_1I=\|S_1\|I$, a repeated application of the order-preserving property of the resolvent map in Proposition~\ref{P:monotoneResolvent} implies
\begin{equation*}
S_n\leq s_nI \qquad n \geq 1
\end{equation*}
   holds a.s.; thus, to find an upper bound on $S_n$, it is enough to prove that $s_n$ is a.s. bounded. Observe that, under the change of variables $s_n=e^{c_n}$ and $\|Y_{n}\|=e^{r_n}$, the recursion \eqref{eq3:L:bndd2} is equivalent to
   \begin{equation*}
{1 \over n+1} h(r_{n+1}-c_{n+1}) + c_n - c_{n+1}= 0 \qquad n \geq 1,
   \end{equation*}
   where $h = f \; \circ \; \exp$ increases on the real line.
 Furhtermore, from Corollary~\ref{C:LlogLInt}, we obtain that $h(r_1 - r) \in L\log^+ L(\mathbb{R},(\log \|\cdot\|)_\# \mu)$ for every real $r$ and that the transformed equation
  $$\mathbb{E}h(r_1 - r) = 0$$ has a unique solution $r\in\mathbb{R}$, since the original equation
 \begin{align*}
      0=\mathbb{E}f_{x}(\|Y_1\|)
  \end{align*}
  has a unique solution $x>0$, cf. \cite[Theorem 6.5]{lekapalfia}.
 Using Proposition~\ref{P:ConvergentStochastic}, the sequence $\{s_n\}_{n \geq 1}$ is a.s. convergent. Hence, $S_n$ has a finite upper bound a.s..

To get a lower bound, by the similarity equivariance of the holomorphic functional calculus, namely, $f(SAS^{-1})=Sf(A)S^{-1},$
the recursion \eqref{eq1:L:bndd2} can be rewritten as
    \begin{equation*}
    {1 \over n+1} g_{S_{n+1}^{-1}}(Y_{n+1}^{-1}) + \log_{S_{n+1}^{-1}}(S_n^{-1}) = 0 \qquad n \geq 1,
   \end{equation*}
   where $g(x) = -f(1/x)$ is also an operator monotone function, and $g(1)=0$ and $g'(1)=1$ also hold.
Exactly the same analysis leads to the bound
\begin{equation*}
S_n^{-1}\leq s_nI \qquad n \geq 1,
\end{equation*}
   where the stochastic sequence of reals defined as $s_1:=\|S_1^{-1}\|$ and
    \begin{equation*}
    {1 \over n+1} g_{s_{n+1}}(\|Y_{n+1}^{-1}\|) + \log_{s_{n+1}}\{s_n\} = 0 \qquad n \geq 1.
   \end{equation*}
Indeed, from the spectral mapping theorem, $\|g(Y_{1}^{-1})\|\in L \log^+ L(\mathbb{P}, \mu)$ also holds. Hence, by the same argument as above, with $g$ in place of $f$, we obtain an upper bound for $S_n^{-1}$, thus a lower bound for $S_n$ a.s., the proof is concluded.
\end{proof}

We consider here, just as in the last section of \cite{lekapalfia}, the "symmetrized" resolvent $M^f_t(a,b):=\Lambda_f((1-t)\delta_a+t\delta_b)$ for $t\in[0,1]$ and $a,b\in\mathbb{P}$ which is defined as the unique solution $x\in\mathbb{P}$ of the generalized Karcher equation
  \begin{equation*}
    tf_x(b) + (1-t)f_{x}(a) = 0,
  \end{equation*}
cf. \cite[Theorem 6.5]{lekapalfia}.
The following result provides a relaxed variant of the a.s. boundedness established in the first part of the proof of \cite[Theorem 6.9]{lekapalfia} which was based on the operator concavity of $f$ and the existence of $\mathbb{E}\|Y_1\|$ and $\mathbb{E}\|Y_1^{-1}\|$.

 \begin{lemma}\label{L:bndd2b} Let $f$ be an operator monotone function on $(0, \infty)$ such that $f(1) = 0$ and $f'(1) = 1$.
     Let $Y_1, Y_2, \hdots$  be a sequence of i.i.d. random variables with probability law $\mu$ on $\mathbb{P}$ such that
     $$ \int_\mathbb{P} \|f(y)\|\log^{+}(\|f(y)\|) \: d\mu(y) < \infty.$$
    For any initial $S_1\in\mathbb{P}$, the stochastic iteration $S_{n+1}=M^f_{1 \over n+2}(Y_{n+1}, S_n)$  is almost surely bounded.
  \end{lemma}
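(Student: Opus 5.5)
We follow the proof of Lemma~\ref{L:bndd2}: dominate the operator iteration by a scalar one using monotonicity, and then handle the scalar iteration with Proposition~\ref{P:ConvergentStochastic}.

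\emph{Scalar comparison.} The iterate $S_{n+1}=M^f_{1/(n+2)}(Y_{n+1},S_n)$ is the unique solution $x$ of
\begin{equation*}
\tfrac{1}{n+2} f_x(Y_{n+1}) + \tfrac{n+1}{n+2} f_x(S_n)=0 .
\end{equation*}
Equivalently, $S_{n+1}=\Lambda_f(\nu_n)$ with $\nu_n=\frac{1}{n+2}\delta_{Y_{n+1}}+\frac{n+1}{n+2}\delta_{S_n}$. By the monotonicity of $\Lambda_f$ in the stochastic order of measures (Proposition 6.6 of \cite{lekapalfia}, recalled before Proposition~\ref{P:monotoneResolvent}), the inequalities $Y_{n+1}\le \|Y_{n+1}\|I$ and $S_n\le s_nI$ give $S_{n+1}\le s_{n+1}I$. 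Here the scalar sequence is defined by $s_1=\|S_1\|$ and
\begin{equation*}
\tfrac{1}{n+2} f(\|Y_{n+1}\|/s_{n+1}) + \tfrac{n+1}{n+2} f(s_n/s_{n+1})=0 .
\end{equation*}
Induction then gives $S_n\le s_nI$ for all $n$ a.s. Substituting $s_n=e^{c_n}$, $\|Y_n\|=e^{r_n}$ and $h=f\circ\exp$, and multiplying by $(n+2)/(n+1)$, we get
\begin{equation*}
\tfrac{1}{n+1} h(r_{n+1}-c_{n+1}) + h(c_n-c_{n+1})=0 .
\end{equation*}
This is exactly the second iteration $\hat s_n$ of Proposition~\ref{P:ConvergentStochastic}, up to the index shift $n\mapsto n+1$, which is harmless.

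\emph{Hypotheses of Proposition~\ref{P:ConvergentStochastic}.} We check them as in Lemma~\ref{L:bndd2}.
\begin{itemize}
\item Integrability: Corollary~\ref{C:LlogLInt} gives $h(r_1-r)\in L\log^+L$ for every real $r$.
\item Unique root: the equation $\mathbb{E}h(r_1-r)=0$ has a unique solution by \cite[Theorem 6.5]{lekapalfia}. Translating $c_n$ by this root, we may assume it is $0$.
\item Local behaviour of $h^{-1}$: since $f$ is operator monotone it is real analytic, with $f(1)=0$ and $f'(1)=1$. Hence $h(u)=f(e^u)$ is analytic, strictly increasing and satisfies $h(u)=u+O(u^2)$ near $0$. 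By the inverse function theorem $h^{-1}$ exists near $0$ and $h^{-1}(x)=x+O(x^2)$.
\end{itemize}
One point needs care: $h$ need not be onto $\mathbb{R}$, for example $f=\log$ is onto but $f(x)=1-1/x$ is bounded. The proof of Proposition~\ref{P:ConvergentStochastic} only applies $h^{-1}$ to the small quantities $-\frac1n h(X_n-\hat s_n)$. Since $h(X_n)/n\to0$ a.s. by Borel--Cantelli, these lie in the range of $h$ for large $n$. For finitely many initial steps, boundedness is trivial because each step is well defined by \cite[Theorem 6.5]{lekapalfia}. We conclude that $c_n$ converges a.s., so $s_n$ is bounded and $S_n$ is bounded above a.s.

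\emph{Lower bound.} We use inversion as in Lemma~\ref{L:bndd2}. The similarity equivariance $f(SAS^{-1})=Sf(A)S^{-1}$ turns the defining equation into
\begin{equation*}
\tfrac{1}{n+2} g_{x^{-1}}(Y_{n+1}^{-1}) + \tfrac{n+1}{n+2} g_{x^{-1}}(S_n^{-1})=0,
\end{equation*}
where $g(x)=-f(1/x)$ is operator monotone with $g(1)=0$ and $g'(1)=1$. Thus $S_{n+1}^{-1}=M^g_{1/(n+2)}(Y_{n+1}^{-1},S_n^{-1})$. The spectral mapping theorem gives $\|g(Y_1^{-1})\|\in L\log^+L$. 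Repeating the argument above with $g$ in place of $f$ bounds $S_n^{-1}$ from above a.s., which bounds $S_n$ from below.

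\emph{Main obstacle.} The delicate step is verifying the hypothesis on $h^{-1}$ and the domain issue just discussed. The $O(h(X_n)^2/n^2)$ error terms also have to be summable a.s.; this follows from \eqref{eq:P:ConvergentStochastic} exactly as in the proof of Proposition~\ref{P:ConvergentStochastic}. The monotonicity comparison itself is routine once Proposition 6.6 of \cite{lekapalfia} is available.
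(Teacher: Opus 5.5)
Your proposal is correct and follows the paper's proof essentially step by step. You dominate $S_n$ by the scalar sequence via monotonicity of $M^f$ (equivalently, of $\Lambda_f$ in the stochastic order), use the exponential substitution to reduce to the second (implicit) iteration of Proposition~\ref{P:ConvergentStochastic} with $h=f\circ\exp$, and obtain the lower bound by inversion with $g(x)=-f(1/x)$. Your extra checks are details the paper leaves implicit, and they are correct: that $h^{-1}(x)=x+O(x^2)$ near $0$ (from analyticity and $h'(0)=f'(1)=1$), and that the quantities fed to $h^{-1}$ lie in the range of $h$ for large $n$.
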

  \begin{proof}
  We will utilize the proof of the previous Lemma~\ref{L:bndd2} in order to find upper and lower bound for the iterates. Notice that $M^f_{1 \over n+2}(Y_{n+1}, S_n)$ is the unique solution of the modified resolvent equation (itself a generalized Karcher equation)
  \begin{equation*}
    \frac{1}{n+2} f_{{S}_{n+1}}(Y_{n+1}) + \frac{n+1}{n+2} f_{{S}_{n+1}}({S}_n) = 0,
  \end{equation*}
  thus, just as in Proposition~\ref{P:monotoneResolvent}, is operator monotone by \cite[Proposition 6.6]{lekapalfia} in both of its variables $(Y_{n+1}, S_n)$.  Given the sequence of i.i.d. random variables $\|Y_1\|, \|Y_2\|, \hdots$, we consider the stochastic sequence $\{s_n\}_{n \geq1}$ of real numbers defined as $s_1:=\|{S}_1\|$ and
   \begin{equation}\label{eq5:L:bndd2b}
    {1 \over n+1} f_{s_{n+1}}(\|Y_{n+1}\|) + f_{s_{n+1}}\{s_n\} = 0 \qquad n \geq 1.
   \end{equation}
   Since $Y_n \leq \|Y_n\|I$ and $S_1 \leq s_1 I$, repeated application of the order-preserving property of the resolvent
   implies
   \begin{equation*}
{S}_n\leq s_nI.
\end{equation*}   
Thus, we can follow the proof of Lemma~\ref{L:bndd2} and transform the problem to $\mathbb{R}$ by an exponential substitution. Indeed, applying the second statement in Proposition~\ref{P:ConvergentStochastic} to $h = f \circ \exp$, we obtain that the generated sequence is convergent; i.e. $s_n \to r$ a.s. if $n \to \infty$, where $r\in\mathbb{R}$ is the unique solution to the equation $\mathbb{E}f_{x}(\|Y_1\|)=0$. Hence, $S_n$ has a finite upper bound almost surely.

Now following the the last paragraph of the proof of Lemma~\ref{L:bndd2} we have the following steps which follow one after another:
    \begin{align*}
    {1 \over n+1} {S}_{n+1}f({S}_{n+1}^{-1}Y_{n+1}) + {S}_{n+1}f({S}_{n+1}^{-1}{S}_n) &= 0 \qquad n \geq 1,\\
    {1 \over n+1} f(Y_{n+1}{S}_{n+1}^{-1}){S}_{n+1} + f({S}_n{S}_{n+1}^{-1}){S}_{n+1} &= 0 \qquad n \geq 1,\\
    {1 \over n+1} g({S}_{n+1}Y_{n+1}^{-1}) + g({S}_{n+1}{S}_n^{-1}) &= 0 \qquad n \geq 1,\\
    {1 \over n+1} g_{{S}_{n+1}^{-1}}(Y_{n+1}^{-1}) + g_{{S}_{n+1}^{-1}}({S}_n^{-1}) &= 0 \qquad n \geq 1.
   \end{align*}
   Now monotonicity of ${S}_{n+1}^{-1}$ in $(Y_{n+1}^{-1},{S}_{n}^{-1})$ implied by the last equation above, combined with a stochastic sequence of real numbers defined as $s_1^{-1}:=\|S_1^{-1}\|$ and
    \begin{equation*}
    {1 \over n+1} g_{s_{n+1}^{-1}}(\|Y_{n+1}^{-1}\|) + g_{s_{n+1}^{-1}}(s_n^{-1}) = 0 \qquad n \geq 1,
   \end{equation*}
   yields again
   \begin{equation*}
{S}_n^{-1}\leq s_n^{-1}I.
\end{equation*}
  From here, we follow again the last paragraph of the proof of Lemma~\ref{L:bndd2} to obtain that this generated sequence $\{{s}_n^{-1}\}_{n\in\mathbb{N}}$ is a.s. bounded from above.
\end{proof}

Given the above lemmas providing almost sure boundedness of the stochastic resolvent sequences, the proof of the following strong law essentially follows the second part of the proof of \cite[Theorem 6.9]{lekapalfia} that we shall expand here further for the sake of clarity.

 \begin{thm}[Strong Law of Large Numbers]\label{T:slln2}
    Let $f$ be an operator monotone function on $(0, \infty)$ such that $f(1) = 0$ and $f'(1) = 1.$
    Let $Y_1, Y_2, \hdots$  be a sequence of i.i.d. random variables with probability law $\mu$ on $\mathbb{P}$ such that
     $$ \int_\mathbb{P} \|f(y)\|\log^{+}(\|f(y)\|) \: d\mu(y) < \infty.$$
     For any initial $\hat{S}_1\in\mathbb{P}$, the stochastic resolvent iteration $\hat{S}_{n+1}=J_{1 \over n+1}(Y_{n+1},\hat{S}_n)$ converges almost surely to $\Lambda_f(\mu)$ with respect to the Thompson metric.
     
     Similarly, for any initial $S_1\in\mathbb{P}$, the stochastic iteration $$S_{n+1}=M^f_{1 \over n+2}(Y_{n+1}, S_n)$$ converge almost surely to $\Lambda_f(\mu)$ with respect to the Thompson metric.
  \end{thm}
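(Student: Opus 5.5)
We will reduce the statement to Theorem~\ref{T:mainStochasticConvergence}, applied not on all of $\mathbb{P}$ but on a bounded order interval that is almost surely eventually visited by the iterates and on which the relevant flows are invariant and uniformly exponentially contracting. The integrability hypothesis gives two things. First, Lemma~\ref{L:bndd2} and Lemma~\ref{L:bndd2b} give almost sure boundedness of $\hat S_n$ and of $S_n$ respectively. Second, since $L\log^+L\subset L^1$, Corollary~\ref{C:UniformInt} yields the local $L^1$-domination of the family $\mathcal F_{x,\varepsilon}$ for $\varphi(x,y)=f_x(y)$. Indeed, $x^{-1/2}f_x(y)x^{-1/2}=f(x^{-1/2}yx^{-1/2})$.

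\textbf{Step 1: a good order interval.} On the full-probability event where the lemmas hold, we fix random constants $0<a\le b$ with $aI\le \hat S_n\le bI$ for all $n$. The comparison sequences $s_n$ from the proofs of those lemmas actually converge to deterministic limits. So we may instead take a deterministic interval $[aI,bI]$ that contains $\Lambda_f(\mu)$ and eventually contains all iterates, by enlarging the limits slightly. On such an interval the perspective $x\mapsto f_x(y)$ is the vector field of a strictly contracting flow. The rate of $\log_x z$ is $1$, and for the perspectives we aim to show, as in \cite[Section 6]{lekapalfia}, that $\alpha(f_\cdot(y))$ restricted to $[aI,bI]$ is bounded below by some $\alpha_{a,b}>0$ uniformly in $y$. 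We expect to obtain this from the integral representation of $f$ together with $f'(1)=1$, by a derivative bound of the form $\ge c(a,b)$. Invariance of $[aI,bI]$ under the mean flow and under the resolvents would follow from the order-preserving property of Proposition~\ref{P:monotoneResolvent}: if $a\le\Lambda\le b$ and the iterate lies in $[aI,bI]$, monotonicity in both $Y$ and $z$ controls the next iterate. The only issue is $Y$ outside the interval, which is absorbed by the real comparison sequences.

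\textbf{Step 2: rerun Theorem~\ref{T:mainStochasticConvergence} locally.} Its proof uses the contraction rate $\alpha$ only at points along the trajectories $\hat S_n$, $S^r_n$, $S^d_n$ and near $\Lambda_\mu$. All of these are eventually in $[aI,bI]$: for $S^d_n$ by \cite[Theorem 4.1]{lekapalfia}, and for $S^r_n$ by the same comparison argument with finitely many atoms. So Steps 1--4 go through with $\alpha$ replaced by $\alpha_{a,b}$. The Lipschitz continuity of $f_x(y)$ on bounded balls follows from the analyticity of $f$. The resolvent inequalities of Corollaries~\ref{C:fullresolventineq} and \ref{C:distancefixedpoints} are then used with rates on the ball, and Etemadi's weighted SLLN applies as before. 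This yields $d_\infty(\hat S_n,\Lambda_f(\mu))\to0$ almost surely.

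\textbf{Step 3: the symmetrized iteration.} Here $S_{n+1}$ solves $\frac1{n+2}f_{S_{n+1}}(Y_{n+1})+\frac{n+1}{n+2}f_{S_{n+1}}(S_n)=0$. Since $f(t)=\log t+O((\log t)^2)$ near $1$ and $d_\infty(S_n,S_{n+1})=O(1/n)$ on the bounded interval, this equation differs from the resolvent equation $\frac1{n+1}f_{S_{n+1}}(Y_{n+1})+\log_{S_{n+1}}S_n=0$ by an error of order $n^{-2}$ in norm. The error constant is controlled uniformly on $[aI,bI]$ by the $L^1$ bound. By the contraction estimate \cite[Theorem 2.4]{lekapalfia}, the two recursions then differ by a summable perturbation at each step. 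The same comparison argument as in Step 3 of Theorem~\ref{T:mainStochasticConvergence} then shows that $S_n$ tracks a resolvent sequence, which converges to $\Lambda_f(\mu)$ by Step 2.

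\textbf{Main obstacle.} The hardest step is the uniform positive lower bound $\alpha_{a,b}$ on the contraction rate of the flows $f_\cdot(y)$ over the bounded interval, uniformly in the unbounded variable $y$, together with the invariance of that interval. Globally the rate may be $0$, as the introduction warns. We will first try to extract the bound from the Löwner integral representation, restricting to the interval so that the relevant denominators are bounded. The second delicate point is controlling the $O(n^{-2})$ error in Step 3 when $Y_{n+1}$ is large. We would handle this with Borel--Cantelli: $\|f(Y_n)\|>n$ happens only finitely often, and $\sum\|f(Y_n)\|^2/n^2<\infty$ almost surely, exactly as in Proposition~\ref{P:ConvergentStochastic}.
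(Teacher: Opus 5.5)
Your overall architecture is the paper's: almost sure boundedness from Lemmas~\ref{L:bndd2} and~\ref{L:bndd2b}, local $L^1$-domination from Corollary~\ref{C:UniformInt}, and a rerun of the proof of Theorem~\ref{T:mainStochasticConvergence} on an order interval $Q$ invariant for the mean flow. For $M^f$ you likewise use a one-step comparison with $J_{1/(n+1)}(Y_{n+1},S_n)$, whose second-order error $O\bigl((C_1\|f(Y_{n+1})\|+C_2)^2/(n+1)^2\bigr)$ is a.s.\ summable by Borel--Cantelli. Choosing $Q$ deterministic, from the deterministic limits of the scalar comparison sequences, is a harmless refinement of the paper's random $c(\omega)$.

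The genuine gap is the step you single out as the main obstacle. A lower bound $\alpha_{a,b}>0$ on the contraction rate of $x\mapsto f_x(y)$ over $[aI,bI]$ that is \emph{uniform in $y$} does not exist in general, so the integral representation cannot deliver it. Take $f(t)=t-1$ and scalar $y>0$. Then $f_x(y)=y-x$, and in the coordinate $u=\log x$ the flow reads $\dot u=ye^{-u}-1$, with linearization $-y/x$. So the rate on $[a,b]$ is at most $y/b$, which tends to $0$ as $y\to0$. More generally the scalar rate is $tf'(t)$ at $t=y/x$; for $f(t)=(t^p-1)/p$ this is $t^p$, which degenerates as $y\to0$ or $y\to\infty$. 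The $L\log^+L$ hypothesis does not keep $\mu$ away from such $y$. Consequently Step~4 of the proof of Theorem~\ref{T:mainStochasticConvergence} cannot be rerun with a constant rate, since its contraction factor is $\alpha(\varphi,\delta_{Y_{n+1}})$ for the raw sample $Y_{n+1}\sim\mu$. The same problem hits your Step~3: a summable perturbation of a recursion that is merely nonexpansive only bounds $d_\infty(\hat S_n,S_n)$; it does not send it to $0$.

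The missing idea is to replace the uniform constant by the per-sample rates $\alpha_n:=\alpha(Q,f_{\cdot}(Y_n))$. Each is positive for fixed $y$, they may be capped at $1$, and they are i.i.d.\ with $\mathbb{E}\alpha_1>0$. The SLLN then gives $\frac1k\sum_{i\le k}\alpha_i\to\mathbb{E}\alpha_1>0$, hence $\sum_n\alpha_n/n=\infty$ and $\prod_n(1+\alpha_n/n)^{-1}=0$ a.s. This is what makes the comparison recursions $d_{n+1}\le(1+\alpha_{n+1}/(n+1))^{-1}\bigl[d_n+e_{n+1}\bigr]$ forget their initial value. With a.s.\ summable errors (your Step~3) one gets $d_n\to0$. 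With the Ces\`aro-controlled errors $\bar c(Y_{n+1},Z_{n+1})/(n+1)$ of Step~4 one gets a $\limsup$ of order $\bar C(\mu,\mu_k)/\mathbb{E}\alpha_1=O(\varepsilon)$. This is how the paper argues.

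Uniform rates are not needed elsewhere. In Steps~1--3 of Theorem~\ref{T:mainStochasticConvergence} the rates that enter are those of the mean field, or of measures with finitely many atoms (the empirical $\mu_k$ and $\mu_{k,n}$), where a positive minimum exists. With Step~4 and your Step~3 redone using these random, mean-positive contraction factors, your plan goes through.
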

  \begin{proof}
     Let us recall that by \cite[Proposition 6.4]{lekapalfia} the first-order system
     $$ \dot{x} = \int_\mathbb{P} f_x(y) \: d\mu(y)$$
    leaves invariant any sufficiently large order-interval $Q = [c^{-1}I,cI].$
     From Lemma~\ref{L:bndd2} stochastic resolvent iteration $\hat{S}_n$ is bounded in $d_\infty$ a.s.; thus, there exists a $c>0$, depending on $\omega\in\Omega$, such that $\hat{S}_n \in [c^{-1}I, cI]$.
    We note that the exponential contraction coefficient $\alpha(Q,f_x(y))$ of the $Q$-invariant flow is strictly positive for any $y \in \mathbb{P}$ (see the proof of \cite[Theorem 6.1]{lekapalfia}. Moreover, $\mathbb{E}(\alpha(Q, f_x(Y)) < \infty$ because without loss of generality we can assume that $\alpha(Q,f_x(y))\leq 1$. Hence, the SLLN and \cite[Theorem 2.1]{lekapalfia} implies
    $${1 \over k} \sum_{i=1}^k \alpha(f_x(Y_i)) \rightarrow \mathbb{E}(\alpha(Q, f_x(Y))) > 0.$$
    We can now apply the proof of Theorem~\ref{T:mainStochasticConvergence}
to the invariant set $Q$, using the local $L^1$-domination condition
from Corollary~\ref{C:UniformInt}, to obtain
$$
d_\infty(\hat{S}_n,\Lambda_f(\mu)) \rightarrow 0
\qquad \text{a.s.}.
$$

   To prove the convergence of the symmetrized iteration, we claim that $d_\infty(\hat{S}_{n},{S}_n)\to 0$ a.s.
   From Lemma~\ref{L:bndd2} and Lemma~\ref{L:bndd2b}, both stochastic resolvent sequences ${S}_n$ and $\hat{S}_n$ are bounded in $d_\infty$ a.s.; thus, there exists a large enough $c>0$, depending on $\omega\in\Omega$, such that for the order-interval $Q = [c^{-1}I,cI]$ we have ${S}_n,\hat{S}_n\in Q$ a.s. for each $n\in\mathbb{N}$. From the Taylor series expansions, we have
      $$ f(x) = x-1 + O((x-1)^2) \quad \mbox{ and } \quad \log x = x-1 + O((x-1)^2)$$ on bounded sets.
     Clearly, $J_{1 \over n+1}(Y_{n+1},{S}_n)$ is the only fixed point of the differential equation
\begin{equation}\label{eq:ode}
   \dot{x}(t) = {1 \over n+1} f_x(Y_{n+1}) + \log_{x}S_n.
\end{equation}
By \cite[Corollary 2.2]{lekapalfia}, recall that for all $n\geq n_0(\omega)$ $$\alpha \left({1 \over n+1} f_x(Y_{n+1}) + \log_{x}S_n \right) \geq 1+\frac{\alpha(Q, f_x(Y_{n+1}))}{n+1}.$$
 Note that
  \begin{equation}\label{eq1:T:slln2}
  \begin{split}
     0 &= {1 \over n+1} f_{{S}_{n+1}}(Y_{n+1}) + f_{{S}_{n+1}}({S}_n)\\
     &= \left({1 \over n+1} f_{{S}_{n+1}}(Y_{n+1}) + \log_{{S}_{n+1}}{S}_{n}\right) + \left(f_{{S}_{n+1}}({S}_n)-\log_{{S}_{n+1}}{S}_{n}\right).
  \end{split}
  \end{equation}
So the contraction property \cite[Theorem 2.4]{lekapalfia} applied to the ODE \eqref{eq:ode} and \eqref{eq1:T:slln2} imply that
  \begin{equation*}
  \begin{split}
  &d_\infty(J_{1 \over n+1}(Y_{n+1},{S}_n),{S}_{n+1})=d_\infty(J_{1 \over n+1}(Y_{n+1},{S}_n),M^f_{1 \over n+2}(Y_{n+1}, S_n))\\
  &\leq \left(1+\alpha\left(Q, \frac{f_x(Y_{n+1})}{n+1}\right)\right)^{-1}\left\|{S}_{n+1}^{-1/2}\left({1 \over n+1} f_{{S}_{n+1}}(Y_{n+1}) + \log_{{S}_{n+1}}({S}_{n})\right){S}_{n+1}^{-1/2}\right\|\\
  &=\left(1+\frac{\alpha\left(Q, f_x(Y_{n+1})\right)}{n+1}\right)^{-1}\|{S}_{n+1}^{-1/2}(f_{{S}_{n+1}}({S}_n)-\log_{{S}_{n+1}}({S}_{n})){S}_{n+1}^{-1/2}\|\\
  &=\left(1+\frac{\alpha\left(Q, f_x(Y_{n+1})\right)}{n+1}\right)^{-1}\|f({S}_{n+1}^{-1/2}{S}_n{S}_{n+1}^{-1/2})-\log({S}_{n+1}^{-1/2}{S}_{n}{S}_{n+1}^{-1/2})\|\\
  &\leq \left(1+\frac{\alpha\left(Q, f_x(Y_{n+1})\right)}{n+1}\right)^{-1}O\left(\left\|{S}_{n+1}^{-1/2}{S}_n{S}_{n+1}^{-1/2}-I\right\|^2\right)
  \end{split}
  \end{equation*} 
   where to get the last inequality, we used the fact that the power series expansions of $f$ and $\log$ about $I$ agree up to first order. Since $S_n$ is a.s. bounded, we have for all $n\geq n_0(\omega)$ that
   $$ O\left(\left\|{S}_{n+1}^{-1/2}{S}_n{S}_{n+1}^{-1/2}-I\right\|\right) = O(\|f(S^{-1/2}_{n+1} S_{n} S^{-1/2}_{n+1})\|)={1 \over n+1 }O(\|f({S}_{n+1}^{-1/2}Y_{n+1}{S}_{n+1}^{-1/2}) \|).$$
   Using Proposition~\ref{P:concave_lip}, this implies
   \begin{equation*}
   O\left(\left\|{S}_{n+1}^{-1/2}{S}_n{S}_{n+1}^{-1/2}-I\right\|\right) \leq {1 \over n+1 }O\left(C_1\|f(Y_{n+1})\|+C_2\right).
   \end{equation*}
   Now, we estimate
   \begin{equation}\label{eq3:T:slln2}
   \begin{split}
   &d_\infty(\hat{S}_{n+1},{S}_{n+1})\\
   &\leq d_\infty\left(J_{1 \over n+1}(Y_{n+1},\hat{S}_n),J_{1 \over n+1}(Y_{n+1},{S}_n)\right)+d_\infty\left(J_{1 \over n+1}(Y_{n+1},{S}_n),{S}_{n+1}\right)\\
   &\leq \left(1+\frac{\alpha\left(Q, f_x(Y_{n+1})\right)}{n+1}\right)^{-1}\left[d_\infty(\hat{S}_{n},{S}_n)+O\left(\left\|{S}_{n+1}^{-1/2}{S}_n{S}_{n+1}^{-1/2}-I\right\|^2\right)\right]\\
   &\leq \left(1+\frac{\alpha\left(Q, f_x(Y_{n+1})\right)}{n+1}\right)^{-1}\left[d_\infty(\hat{S}_{n},{S}_n)+{1 \over (n+1)^2 }O\left((C_1\|f({Y}_{n+1})\|+C_2)^2\right)\right].
   \end{split}
   \end{equation}
   Note that the product of the terms in front of the last expression satisfies
   \begin{equation}\label{eq4:T:slln2}
   \prod_{n\geq 0}\left(1+\frac{\alpha\left(Q, f_x(Y_{n+1})\right)}{n+1}\right)^{-1}=0\quad\text{ a.s.,}
   \end{equation}
   otherwise using the SLLN would contradict the fact that $\mathbb{E}\left[\alpha\left(B, f_x(Y_{n+1})\right)|\mathfrak{F}_n\right]>0$ for any fixed large enough bounded metric ball $B\subseteq\mathbb{P}$ intersecting $\supp(\mu)$.
   Now, an upper estimate of sums of expectations of truncations as in \eqref{eq:P:ConvergentStochastic} and the Borel-Cantelli lemma imply again that
   \begin{equation*}
   \sum_{n\geq 0}{1 \over (n+1)^2 }O\left((C_1\|f({Y}_{n+1})\|+C_2)^2\right)<\infty \quad\text{ a.s.}
   \end{equation*}
   as well, in particular the tail
   \begin{equation*}
   \sum_{n\geq m}{1 \over (n+1)^2 }O\left((C_1\|f({Y}_{n+1})\|+C_2)^2\right)
   \end{equation*}   
   a.s. goes to $0$ as $m\to\infty$. Since $d_\infty(\hat{S}_{n},{S}_n)$ is bounded a.s., this and then explicitly solving the recursion given by the initial state $d_{m}:=d_\infty(\hat{S}_{m},{S}_{m})$ and for $n\geq m$
   \begin{equation*}
   d_{n+1}:=\left(1+\frac{\alpha\left(Q, f_x(Y_{n+1})\right)}{n+1}\right)^{-1}\left[d_n+{1 \over (n+1)^2 }O\left((C_1\|f({Y}_{n+1})\|+C_2)^2\right)\right]
   \end{equation*}   
   combined with \eqref{eq3:T:slln2},\eqref{eq4:T:slln2} imply that $d_\infty(\hat{S}_{n},{S}_n)\leq d_n  < \varepsilon$ a.s. for all large enough $n\geq m\in\mathbb{N}$ for arbitrary $\varepsilon > 0$ proving our claim. The proof is complete.

   \end{proof}

\section*{Acknowledgments}
The authors acknowledge the help of ChatGPT Sol 5.6 in obtaining simplified and somewhat extended arguments within the proof of Proposition~\ref{P:ConvergentStochastic}.

The work of M.~P\'alfia was supported by the Ministry of Innovation and Technology of Hungary from the National Research, Development and Innovation Fund and financed under the TKP2021-NVA funding scheme, Project no. TKP2021-NVA-09; the Hungarian Scientific Research fund NKFIH ADVANCED-150059 and by the János Bolyai Research Scholarship of the Hungarian Academy of Sciences, Grant No. BO/00998/23/3.

\end{document}